\numberwithin{equation}{subsection}
\newtheorem{theorem}{Theorem}[subsection]
\begin{document}

\title{equivariant differential characters and symplectic reduction}

\author{Eugene Lerman}
\thanks{Supported in part by NSF grant DMS-0603892}
\author{Anton Malkin}
\thanks{Supported in part by NSF grant DMS-0456714}
\address{Department of Mathematics, 
University of Illinois at Urbana-Champaign, 
1409 W. Green Street, Urbana, IL 61801, USA}

\begin{abstract}
We describe equivariant differential characters 
(classifying equivariant circle bundles with connections),
their prequantization, and reduction.
\end{abstract}

\maketitle

\tableofcontents

\setcounter{section}{-1}

\section{Introduction}

This paper is a continuation of \cite{LermanMalkinPreq}. The goal
of the project is to understand geometric (pre)quantization and 
symplectic reduction in the context of stacks. 
There are two practical reasons for doing that.
First, even if one is only interested in global
Lie group actions on manifolds, symplectic quotients are generally
orbifolds (i.e. Deligne-Mumford stacks). Second, the language of
stacks allows one to work locally and thus to avoid messy
\v{C}ech-type arguments.

\subsection{Prequantization}
The classic geometric prequantization theorem 
(due to Weil \cite{Weil1952} and Kostant \cite{Kostant1970})
says that given a closed integral differential 2-form 
$\omega$ on a manifold $M$
there exists a principal $S^1$-bundle $P \rightarrow M$ 
with a connection $A$ such that 
the curvature of $A$ is equal to $\omega$, and moreover, the set of 
isomorphism classes of such pairs $(P, A)$ is 
a principal homogeneous space of the group of flat $S^1$-bundles.
Recall that a 2-form $\omega$ on a manifold $M$ is integral if 
$\int_S \omega \in \mathbb{Z}$ for any closed smooth singular 2-chain
$S \in Z_2 (M)$. 

It would be preferable for prequantization procedure to produce
unique output (an $S^1$-bundle with connection). 
For example, this is clearly required to make sense of statements like
``prequantization commutes with reduction'' - see below.
In order to have unique output of the prequantization 
one has to refine its input. One way to do it, due to Cheeger and Simons 
\cite{CheegerSimons1985}, is by using 
differential characters (an alternative approach
is provided by Deligne cohomology - see \cite{Beilinson1984}).
A differential character of degree 2 (degree 1 in Cheeger-Simons 
grading) is a pair $(\omega , \chi )$, where 
$\omega \in \Omega^2 (M)$ is a closed 
2-form and $\chi : Z_1 (M) \rightarrow \mathbb{R}/\mathbb{Z}$ is a character 
of the group $Z_1 (M)$ of smooth singular 1-cycles. 
This pair should satisfy the following compatibility condition: 
\begin{equation}\nonumber
\chi ( \partial S )  = \int_{S} \omega 
\quad \mathrm{mod} \ \mathbb{Z} \ ,
\end{equation}
for any smooth singular 2-chain $S \in C_2 (M)$. One can show
(cf. \citelist{\cite{CheegerSimons1985} \cite{HopkinsSinger2005}
\cite{LermanMalkinPreq}} for various versions of the proof)
that differential characters classify isomorphism classes of 
principal $S^1$-bundles with connections. The classifying bijection
(whose inverse is the prequantization map) associates
to a principal $S^1$-bundle $P$ with a connection $A$ 
its differential character 
$(\omega , \chi )$, where $\omega$ is the curvature of $A$ and
$\chi$ is the holonomy of $A$ 
(we identify $S^1$ with $\mathbb{R}/\mathbb{Z}$ throughout the paper).

\subsection{Lie group actions}
Let $G$ be a Lie group, $\mathfrak{g}$ the Lie algebra of $G$, 
$\mathfrak{g}^*$ the dual space of $\mathfrak{g}$, and
$<,>$ the natural pairing between $\mathfrak{g}$ and $\mathfrak{g}^*$.
Suppose $G$ acts on a manifold $M$ and
consider a $G$-equivariant principal $S^1$-bundle $P \rightarrow M$ with an
invariant connection $A$. That means we are given
a lifting of the $G$-action to the total space $P$ 
(commuting with the $S^1$-action) and the connection form 
$A \in \Omega^1 (P)$ is 
$G$-invariant. Then the curvature $\omega$ of $A$ is also $G$-invariant and 
moreover there is a $G$-equivariant map 
$\mu  : P \rightarrow \mathfrak{g}^*$ given by 
$<\! \mu , X \!\! > = i_{\varepsilon_{x}} A$, where $\varepsilon_{X}$ is
the vector field on $P$ generating the action of an element 
$X \in \mathfrak{g}$. The map $\mu$ is $S^1$-invariant and
hence descends to a map from $M$ to $\mathfrak{g}^*$ which we
also denote by $\mu$. It follows from the definition of the curvature that
$i_{\varepsilon_{x}} \omega = - < \! d \mu , X \!\! >$. Hence
$\mu$ is the moment map in the sense of equivariant symplectic geometry
(see for example \cite{GuilleminSternberg1999}).
Note that if $G$-action is free (so that the quotient $M/G$ is a manifold) 
then a $G$-equivariant bundle $P$ with an invariant connection $A$ descends to 
$M/G$ iff the moment map is identically equal to zero, in other words 
iff the forms $A$ and $\omega$ are $G$-basic. Recall that a 
differential form is $G$-\emph{basic} 
if it is $G$-invariant and $G$-horizontal (i.e.  
vanish on vector fields generating the action of $\mathfrak{g}$).

Equivariant prequantization procedure 
should produce a $G$-equivariant principal 
$S^1$-bundle with an invariant connection given a closed invariant 
integral 2-form
$\omega$ on $M$. Obviously there some obstructions. First of all there should
exist a moment map $\mu$ such that 
$i_{\varepsilon_{x}} \omega = - < \! d \mu , X \!\! >$
for any $X \in \mathfrak{g}$. This means that the action is \emph{Hamiltonian}.
If $G$ is connected and simply-connected then existence of the moment map 
ensures existence of an equivariant 
prequantization bundle with an invariant connection (cf. \cite{Kostant1970}).
For a general $G$ there are additional obstructions.
The origin of these complications is that in the equivariant situation 
it is not enough to think of isomorphism classes of bundles -- one needs to
consider an actual bundle in order to lift the $G$-action to the total space.
This becomes much more clear if one uses an equivalent definition
of an equivariant bundle in terms of the action groupoid 
$\xymatrix{{M} &
\ar@<0.3ex>[l]^{p}
\ar@<-0.7ex>[l]_{a}  
{G \times M}}$,
where the two maps are the projection and the action.
In this language an equivariant $S^1$-bundle is a bundle
$P \rightarrow M$ together with an isomorphism 
$\varphi: a^* P \xrightarrow{\sim} p^* P$ of
the two pull-backs of $P$ to $G \times M$. Note that we
have to use a morphism in the category of bundles on 
$G \times M$, so it is not enough to
consider isomorphism classes of bundles.
Therefore in order to understand equivariant prequantization, we
have to upgrade the prequantization map to a functor from some category
of differential characters to the category of principal $S^1$-bundles 
with connections.

\subsection{The stack of bundles and the stack of differential characters}
Let $\mathcal{DBS}^1 (M)$ be the category of 
$S^1$-bundles with connections on a manifold $M$. 
In  \cite{HopkinsSinger2005} Hopkins and Singer introduced a category
of differential characters $\mathcal{DC}_2^2 (M)$ and showed
that it is equivalent to $\mathcal{DBS}^1 (M)$. As the name suggests,
the isomorphism classes of objects of $\mathcal{DC}_2^2 (M)$
are in bijection with differential characters. We give the complete
definition of the category $\mathcal{DC}_2^2 (M)$ in Subsection
\ref{PreqDef}. Let us just mention that an object of 
$\mathcal{DC}_2^2 (M)$ is a triple $(c,h, \omega )$, where
$c \in Z^2 (M, \mathbb{Z})$ is a smooth singular 2-cocycle,
$\omega \in \Omega^2 (M)$ is a closed 2-form, and
$h \in C^1 (M, \mathbb{R})$ is a smooth singular 1-chain satisfying
$dh = c - \omega$. 
In the previous paper \cite{LermanMalkinPreq} we introduced a 
prequantization functor 
\begin{equation}\nonumber
\mathrm{Preq} : \mathcal{DC}_2^2 (M) \to \mathcal{DBS}^1 (M) \ ,
\end{equation}
such that if $\mathrm{Preq} ((c,h, \omega ))$ is a bundle $P$ 
with a connection
$A$, then the Chern class of $P$ is $[c] \in H^2 (M, \mathbb{Z})$
and the curvature of $A$ is $\omega$.
Moreover we show (and it is crucial for our definition of
$\mathrm{Preq}$) that both $\mathcal{DC}_2^2$ and $\mathcal{DBS}^1$ 
are stacks over the category of manifolds and 
$\mathrm{Preq}$ is an equivalence of stacks.
This allowed us to define geometric prequantization of
arbitrary stacks over the category of manifolds, in particular
prequantization of orbifolds and equivariant prequantization
of manifolds with Lie group actions. For example,
given a manifold $M$ with an action of a Lie group $G$ we have an
equivalence functor
\begin{equation}\nonumber
\mathcal{DC}_2^2 (M \leftleftarrows G \times M) 
\simeq \mathcal{DC}_2^2 ([M/G]) 
\xrightarrow{\mathrm{Preq}} \mathcal{DBS}^1 ([M/G]) \simeq
\mathcal{DBS}^1 (M \leftleftarrows G \times M) \ .
\end{equation}
Here $[M/G]$ is the quotient stack. Objects of the category
$\mathcal{DBS}^1 (M \leftleftarrows G \times M)$
are $G$-equivariant principal $S^1$-bundles on $M$ with
$G$-basic connections. The category 
$\mathcal{DC}_2^2 (M \leftleftarrows G \times M)$ of
equivariant differential characters is 
defined in Section \ref{PrequantizationSection}. 

\subsection{Invariant connections}
In the present paper we are interested in
equivariant bundles with $G$-invariant (but not necessarily basic)
connections. These form a category 
$d\mathcal{BS}^1_{\mathrm{inv}} (M \leftleftarrows G \times M)$.
The corresponding category of differential characters is defined
in Section \ref{InvariantSection} and is denoted by
$\mathcal{DC}^2_{\mathrm{inv}} (M \leftleftarrows G \times M)$.
In particular a $G$-invariant differential character 
(i.e. an isomorphism class of objects of the category
$\mathcal{DC}^2_{\mathrm{inv}} (M \leftleftarrows G \times M)$) is a triple 
$(\omega, \mu , \Psi)$, where $\omega \in \Omega^2 ( M )$ is a 2-form, 
$\mu : M \rightarrow \mathfrak{g}^*$ is a (moment) map, and 
$\Psi$ is a character 
$Z_1 (M \leftleftarrows G \times M ) \rightarrow \mathbb{R}/\mathbb{Z}$
of the group $Z_1 (M \leftleftarrows G \times M )$ of smooth singular
$1$-cycles on the action groupoid. We refer the reader to  
Section \ref{CharactersSection} for details, but let us give here two
examples of subgroups of $Z_1 (M \leftleftarrows G \times M )$.
First one is the group $Z_1 (M)$ of 1-cycles on $M$. The restriction
of $\Psi$ to $Z_1 (M)$ together with $\omega$ form a differential
character on $M$ corresponding to a bundle with connection
(without equivariant structure). The second typical subgroup of 
$Z_1 (M \leftleftarrows G \times M )$ is the stabilizer group
(in $G$) of a point on $M$. The restriction of $\Psi$ to this
subgroup corresponds (under prequantization map) to the action of the
stabilizer group on the fiber of the prequantization $S^1$-bundle.
Let us also note that the character is equivariant (in other words,
belongs to $\mathcal{DC}^2_2 (M \leftleftarrows G \times M) \simeq
\mathcal{DC}^2_2 ([M/G])$ iff $\omega$ is basic 
or, equivalently, iff $\mu$ vanishes identically.

Here is the first main result of the present paper 
stated both in terms of categories and isomorphism classes of objects
(see Theorems \ref{GlobalQPreqThm}  and
\ref{ActionCharThm} for more precise/complete statements) .

\begin{theorem}\label{IntroThmA}
Suppose a Lie group $G$ acts on a manifold $M$. 
\begin{itemize}
\item
There is an equivalence
of categories (prequantization functor)
\begin{equation}\nonumber
\mathrm{Preq} : 
\mathcal{DC}_{\mathrm{inv}}^2 (M \leftleftarrows G \times M) 
\xrightarrow{\sim}  
d\mathcal{BS}^1_{\mathrm{inv}} (M \leftleftarrows G \times M) 
\end{equation}
from the category 
$\mathcal{DC}_{\mathrm{inv}}^2 (M \leftleftarrows G \times M)$
of $G$-invariant differential characters 
to the category 
$d\mathcal{BS}^1_{\mathrm{inv}} (M \leftleftarrows G \times M)$
of $G$-equivariant principal $S^1$-bundles with $G$-invariant connections.
\item
Isomorphism classes of $G$-equivariant $S^1$-bundles 
with $G$-invariant connections are in bijection
with $G$-invariant differential characters, i.e. triples 
$(\omega, \mu , \Psi)$, where $\omega \in \Omega^2 ( M )$ 
is a $G$-invariant  2-form (the curvature of the connection), 
$\mu : M \rightarrow \mathfrak{g}^*$ is a $G$-equivariant (moment) map, 
and $\Psi$ is a $G$-invariant character 
$Z_1 (M \leftleftarrows G \times M ) \rightarrow \mathbb{R}/\mathbb{Z}$.
\end{itemize}
\end{theorem}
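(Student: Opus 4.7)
The overall plan is to reduce the theorem to the non-equivariant equivalence $\mathrm{Preq}: \mathcal{DC}_2^2(M) \xrightarrow{\sim} \mathcal{DBS}^1(M)$ of \cite{LermanMalkinPreq} applied levelwise to the nerve $M \leftleftarrows G \times M \leftleftarrows G \times G \times M$ of the action groupoid, with the moment map $\mu$ providing precisely the extra datum that bridges basic and $G$-invariant connections. As a first step I would unpack the target category: an object of $d\mathcal{BS}^1_{\mathrm{inv}}(M \leftleftarrows G \times M)$ consists of a bundle $P \to M$ with a $G$-invariant connection $A$ together with a bundle isomorphism $\varphi: a^*P \xrightarrow{\sim} p^*P$ on $G \times M$ satisfying the usual cocycle identity on $G \times G \times M$; crucially, $\varphi$ is an isomorphism of bundles, not of bundles-with-connection. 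The defect $p^*A - \varphi^* a^*A$ is a horizontal $G$-invariant 1-form on $G \times M$, and invariance of $A$ forces it to be determined by $\langle \mu, X \rangle = i_{\varepsilon_X} A$ for a unique equivariant map $\mu: M \to \mathfrak{g}^*$, so $\mu$ exactly parametrizes the freedom gained by dropping the basic condition.

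To construct $\mathrm{Preq}$ I would send an invariant differential character $(\omega, \mu, \Psi)$ to the bundle with connection $(P, A)$ obtained by applying the non-equivariant $\mathrm{Preq}$ to $(\omega, \Psi|_{Z_1(M)})$, and then define $\varphi: a^*P \to p^*P$ by applying the non-equivariant $\mathrm{Preq}$ on $G \times M$ to the difference character assembled from $\Psi$ restricted to cycles on $G \times M$ and corrected by the $\mu$-term. The inverse functor sends $(P, A, \varphi)$ to the triple in which $\omega$ is the curvature of $A$, $\mu$ is defined by $\langle \mu, X \rangle = i_{\varepsilon_X} A$, and $\Psi$ is the character built from the holonomy of $A$ on 1-cycles in $M$ together with the $\mathbb{R}/\mathbb{Z}$-valued logarithm of $\varphi$ on 1-cycles supported in $G \times M$. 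Fully faithfulness is inherited from the non-equivariant case applied at each simplicial level, once one checks that morphisms on $M$ respect the descent data $\varphi$ precisely when they respect $\Psi$; essential surjectivity is similarly built by descent from the non-equivariant theorem on $M$ and then extended to $G \times M$ using $\varphi$ and the cocycle it satisfies.

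The main obstacle will be the simplicial bookkeeping for the group $Z_1(M \leftleftarrows G \times M)$: one must arrange its definition and boundary operator so that the character condition on $\Psi$ simultaneously encodes the ordinary character condition on $M$ (which produces the underlying bundle with connection), a cocycle condition on $G \times M$ (which produces $\varphi$), and the coherence identity on $G \times G \times M$ (which produces the cocycle identity for $\varphi$), while $\mu$ appears as the unique horizontal correction that permits non-basic $A$. Once these compatibilities are established, the second bullet of the theorem, the bijection on isomorphism classes, is an immediate corollary of the equivalence of categories by passing to $\pi_0$.
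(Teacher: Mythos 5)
Your proposal follows essentially the same route as the paper: apply the non-equivariant equivalence of \cite{LermanMalkinPreq} levelwise to the nerve to get the groupoid prequantization equivalence (the paper's Theorem \ref{InvPreq}, where the defect $\partial_1^*A-\partial_0^*A$ appears as the $\Omega^1(\Gamma)$-component $\alpha$ of the character), then characterize invariance of $A$ by horizontality of $\alpha$, i.e.\ $\alpha=\langle\mu,dg\,g^{-1}\rangle$ (Theorem \ref{GlobalQPreqThm}), and finally extract the triple $(\omega,\mu,\Psi)$ on isomorphism classes exactly as in Theorem \ref{ActionCharThm}. The ``simplicial bookkeeping'' you flag as the main obstacle is precisely what the paper's Section \ref{CharactersSection} carries out, and your outline of what it must encode matches the paper's conditions, so the plan is sound.
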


\subsection{Symplectic reduction} 
We continue with our setup of a Lie group $G$ acting on a manifold $M$,
a $G$-equivariant principal $S^1$-bundle $P \rightarrow M$ with 
a $G$-invariant connection $A \in \Omega^1(P)$, 
the curvature $\omega \in \Omega^1 (M)$, and the moment map 
$\mu: M \rightarrow \mathfrak{g}^*$. Assume zero is a regular value
of the moment map. Then $\mu^{-1} (0)$ is a manifold and the restriction
of the pair $(P, A)$ to $\mu^{-1} (0)$ is a principal $S^1$-bundle
with a basic connection, hence a bundle with connection 
on the stack $[\mu^{-1} (0)/G]$. 
On the other hand one can restrict the corresponding
differential character $(\omega , \mu , \Psi )$ to $\mu^{-1} (0)$
and the restricted character descends 
to the quotient stack $[\mu^{-1} (0) /G]$. 
In particular the form $\omega|_{\mu^{-1} (0)}$
descends to $[\mu^{-1} (0) /G]$, which is the point of symplectic reduction 
(to be honest, a deeper statement is about 
non-degeneracy of the reduction of a non-degenerate 2-form $\omega$ 
\citelist{\cite{MarsdenWeinstein1974} \cite{Meyer1973}},
but the non-degeneracy condition is not relevant for geometric 
(pre)quantization).

Our second result is that the prequantization bijection between
bundles with connections
and differential characters commutes with reduction. Since we are interested
in equivalences of categories let us change the point of view and 
consider all bundles/characters such that their moment map vanishes 
on a given $G$-stable submanifold $N \subset M$. 
We denote the corresponding subcategories by  
$d\mathcal{BS}^1_{\mathrm{inv}, N} (M \leftleftarrows G \times M)$ and
$\mathcal{DC}_{\mathrm{inv}, N}^2 (M \leftleftarrows G \times M)$.
Then we have the following result (see Theorem \ref{ActionReductionThm}).
\begin{theorem}\label{IntroThmB}
Let a Lie group $G$ act on a manifold $M$ and
$i: N \hookrightarrow M$ an inclusion of a $G$-stable submanifold.
The diagram
\begin{equation}\nonumber
\xymatrix{
{\mathcal{DC}^2_{\mathrm{inv}} (M \leftleftarrows G \times M)} 
\ar[r]^{\mathrm{Preq}} 
&
{d\mathcal{BS}^1_{\mathrm{inv}} (M \leftleftarrows G \times M)}
\\
{\mathcal{DC}^2_{\mathrm{inv}, N} (M \leftleftarrows G \times M)} 
\ar@{^{(}->}[u]
\ar[r]^{\mathrm{Preq}} 
\ar[d]^{\textstyle i^*}
&
{d\mathcal{BS}^1_{\mathrm{inv}, N} (M \leftleftarrows G \times M)}
\ar@{^{(}->}[u]
\ar[d]^{\textstyle i^*}
\\
{\mathcal{DC}^2_{2} (N \leftleftarrows G \times N)} 
\ar[r]^{\mathrm{Preq}}
\ar[d]^{\textstyle \wr}
&
{\mathcal{DBS}^1 (N \leftleftarrows G \times N)}
\ar[d]^{\textstyle \wr}
\\
{\mathcal{DC}^2_{2} ([ N / G ])} 
\ar[r]^{\mathrm{Preq}}
&
{\mathcal{DBS}^1 ([ N / G ])}
}
\end{equation}
commutes up to natural transformations. 
Here the first row represents $G$-invariant characters/connections,
the second row has moments vanishing on the submanifold $N$, the third is
the restriction of the second to $N$, and the fourth is
the descent of the third to the quotient (which makes sense 
because of the moment vanishing).
\end{theorem}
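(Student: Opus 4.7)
The plan is to decompose the diagram into the three squares and handle each separately, exploiting the fact, already established in \cite{LermanMalkinPreq} and in Theorem \ref{IntroThmA}, that $\mathrm{Preq}$ is a morphism of stacks over the category of (Lie) groupoids, hence commutes with pullback along groupoid morphisms up to canonical natural isomorphism.

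The top square is essentially tautological: the vertical arrows are the full embeddings of the subcategories cut out by the vanishing-on-$N$ condition on the moment map, and $\mathrm{Preq}$ is defined by the same formula on an object irrespective of whether we view it in the subcategory or the ambient category. Commutativity is strict, not merely up to 2-morphism. For the middle square, the restriction $i^{*}$ is the pullback along the morphism of action groupoids $(N \leftleftarrows G \times N) \to (M \leftleftarrows G \times M)$ induced by the $G$-equivariant inclusion $i \colon N \hookrightarrow M$. On the characters side, $i^{*}$ pulls back the triple $(c,h,\omega)$ componentwise and pulls back the character $\Psi$ along the induced map on 1-cycles; on the bundles side it pulls back $P$ and the connection $A$. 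The point is that when the moment map $\mu$ of the original character vanishes on $N$, the restricted moment $i^{*}\mu = \mu \circ i$ is identically zero, so the restricted connection is $G$-basic and lands in $\mathcal{DBS}^{1}(N \leftleftarrows G \times N)$; symmetrically on the character side, so both $i^{*}$'s are well defined. Commutativity of the middle square up to natural isomorphism is then a direct consequence of $\mathrm{Preq}$ being a stack morphism: the pullback of the prequantization bundle is canonically isomorphic to the prequantization of the pulled-back character, and these canonical isomorphisms assemble into the required natural transformation.

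The bottom square commutes (strictly, or up to a canonical 2-isomorphism coming from the choice of equivalences) because the vertical equivalences are precisely the descent equivalences encoded by the stack property, and $\mathrm{Preq}$ on $[N/G]$ was \emph{defined} in \cite{LermanMalkinPreq} by transport along these equivalences from $\mathrm{Preq}$ on the action groupoid $(N \leftleftarrows G \times N)$. So the only content is the choice of compatible quasi-inverses for the two descent equivalences, and any such choice yields the desired natural isomorphism.

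The main obstacle, and the step that requires the most care, is the middle square: one has to verify that the canonical isomorphism $\mathrm{Preq}(i^{*}\chi) \xrightarrow{\sim} i^{*}\mathrm{Preq}(\chi)$ respects the equivariant structure, i.e.\ intertwines the pulled-back isomorphism $\varphi \colon a^{*} P \xrightarrow{\sim} p^{*} P$ with the one constructed from $i^{*}\chi$ directly from the action-groupoid 1-cycles. This reduces to chasing the definition of $\Psi$ on the subgroup of $Z_1(M \leftleftarrows G \times M)$ generated by $G$-paths above points of $N$ (which, as noted after Theorem \ref{IntroThmA}, controls the equivariant structure on fibers), and checking that the formula used to build $\varphi$ from $\Psi$ in the proof of Theorem \ref{IntroThmA} is manifestly natural in restriction to $G$-stable submanifolds. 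Once this naturality is in hand, pasting the three squares yields the theorem.
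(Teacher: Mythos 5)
Your proposal is correct and follows essentially the same route as the paper: the paper's proof reduces the theorem to the facts that the extended $\mathrm{Preq}$ of Theorem \ref{InvPreq} commutes with pull-backs up to natural transformation and that $\pi^*\alpha = \partial_1^*A - \partial_0^*A$ identifies the relevant subcategories, which is exactly your square-by-square decomposition (tautological top square, pullback-compatibility for the middle, stack descent for the bottom). One small point of precision: vanishing of the moment map on $N$ by itself only makes $i^*A$ horizontal; the restriction is \emph{basic} because objects of $d\mathcal{BS}^1_{\mathrm{inv},N}$ are in addition $G$-invariant, the splitting of ``basic'' into ``horizontal plus invariant'' being spelled out in Sections \ref{InvariantSection} and \ref{ReductionSection}.
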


\subsection{General Lie groupoids}
The above discussion concerned bundles, connections, and differential 
characters, in the setup of a Lie group $G$ acting on a manifold $M$. 
However it should be clear by now that definitions and results are best stated
using the language of an action groupoid $M \leftleftarrows G \times M$.
Hence it makes sense to work in the context of a general Lie groupoid
$M \leftleftarrows \Gamma$,
which we do most of the time in the present paper. Our other motivation
for using general groupoids is in that they provide atlases for stacks 
(and in particular for manifolds). In other words, we try to describe
prequantization/reduction locally. One should be careful however with
stacky interpretations of groupoid results.
Not every constructions is atlas-independent (i.e. makes sense for 
the underlying stack). For example, basic connections descend
to the quotient stack while invariant ones don't. 

\subsection{Structure of the paper}
Our goal is to describe equivariant bundles with invariant connections
in terms of differential characters. 
We do this in two steps. First, in Section \ref{PrequantizationSection},
we define a category $d\mathcal{BS}^1 (M \leftleftarrows \Gamma)$ 
of equivariant principal $S^1$-bundles with arbitrary connection. 
Roughly speaking, this category sits in between the category 
$\mathcal{BS}^1 (M \leftleftarrows \Gamma)$ 
of equivariant bundles without connections and the category
$\mathcal{DBS}^1 (M \leftleftarrows \Gamma)$ of equivariant bundles with
basic connections. 
Then in Theorem \ref{InvPreq} we construct a (prequantizaion) equivalence
\begin{equation}\nonumber
\mathrm{Preq} : \mathcal{DC}^2_{2-1} (M \leftleftarrows \Gamma)
\xrightarrow{\sim} d\mathcal{BS}^1 (M \leftleftarrows \Gamma) \ ,
\end{equation}
where $\mathcal{DC}^2_{2-1} (M \leftleftarrows \Gamma)$ is certain
category of differential characters sitting between
the category $\mathcal{DC}^2_{1} (M \leftleftarrows \Gamma)$
classifying equivariant principal $S^1$-bundles and the category
$\mathcal{DC}^2_{2} (M \leftleftarrows \Gamma)$
classifying equivariant bundles with basic connections.
This is the main technical result of the paper. 
Our construction of the functor $\mathrm{Preq}$ has two important features.
First, it is explicit enough to allow us 
to track invariance properties of the connections via
the corresponding differential characters. More precisely,
a connection on an equivariant
bundle canonically determines a 1-form $\alpha \in \Omega^1 (\Gamma)$
and this form is a part of the differential character
(see Theorem \ref{InvPreq} for details). Second property of 
$\mathrm{Preq}$ is that it commutes 
(up to natural transformations) with pull-backs (in particular,
restrictions). This implies the ``prequantization commutes with
reduction'' theorem.

On the second step, in Section 3, we translate invariance property 
of connections into conditions on the corresponding differential
character and thus prove the first part of Theorem \ref{IntroThmA}.
We also discuss the moment map and invariant connections/curvatures 
on a general groupoid. In particular, we observe that the invariance condition 
only makes sense on the zero level of the moment map. So 
in the groupoid case the usual (global action)
reduction procedure: consider invariant symplectic forms -- restrict to
the zero level of the moment map -- 
take quotient, should be reversed as follows:
restrict to the zero level of the moment map -- consider invariant forms --
take quotient (in fact, the quotient is a tautology in the stacky language).

In Section \ref{ReductionSection} we discuss reduction and
prove Theorem \ref{IntroThmB}.

In Section \ref{CharactersSection} we provide an explicit description 
of the actual differential characters on a groupoid 
(i.e of the isomorphism classes of
objects of $\mathcal{DC}^2_{2-1} (M \leftleftarrows \Gamma)$) and
prove the second part of Theorem \ref{IntroThmA}.

Finally, Section \ref{ExamplesSection} provides some examples of
differential characters and reduction.

\subsection{Previous Results}
There are several recent papers dealing with equivariant
geometric prequantization
and reduction. In the case of a global Lie group action 
Gomi \cite{Gomi2005} describes prequantization via 
equivariant Deligne cohomology. Lupercio and Uribe 
\cite{LupercioUribe2006} describe equivariant differential characters 
for a finite group action (their construction is different from ours).
Behrend and Xu \cite{BehrendXu2006} prove a version of
Kostant's prequantization theorem on proper groupoids
(we explain how to derive a generalization of their result using differential
characters in subsection \ref{KostantSubSection}).  

Besides being restricted to global quotients 
(except \cite{BehrendXu2006}) the above papers deal with
isomorphism classes rather than stacks of bundles. As explained 
above we believe that it is impossible to properly understand geometric
quantization on stacks without working with categories of bundles
(i.e. with both objects and morphisms).

Bos \cite{Bos2007} deals with symplectic reduction in a Lie groupoid case.
His approach (and in fact his problem) is different from ours --
he studies certain \emph{internal} symplectic structure/reduction.

As always when talking about symplectic structures and groupoids one should
not confuse symplectic structures on the manifolds of objects (our situation)
and on arrows (``symplectic groupoids'' of Weinstein \cite{Weinstein1987}).

\subsection{Higher degree versions}
In this paper we are dealing with invariant 
differential characters of degree 2. 
Similar results hold for degree 3 characters (classifying
$S^1$-gerbes with connective structures). In particular
equivariant string connections appear naturally in this approach.

We wanted to keep this text free of obscure notions of 
higher stacks and their descent conditions and so postponed
discussion of gerbes and their characters for another paper.

\section{Circle bundles with connections and  prequantization}
\label{PrequantizationSection}

\subsection{Prequantization}\label{PreqDef}
We recall the construction of the prequantization
functor in \cite{LermanMalkinPreq}. 
Let $M$ be a differentiable manifold and consider the category
$\mathcal{DBS}^1 (M)$ of principal $S^1$-bundles with connections on $M$,
\emph{i.e.} the objects of $\mathcal{DBS}^1 (M)$ are $S^1$-bundles
with connections and morphisms are smooth $S^1$-equivariant maps preserving
connections. In \cite{LermanMalkinPreq} we described a
prequantization functor
\begin{equation}\nonumber
\mathrm{Preq} : \mathcal{DC}_2^2 (M) \to \mathcal{DBS}^1 (M) \ ,
\end{equation}
where $\mathcal{DC}^2_2 (M)$ is the category of 
differential characters defined below. We also showed that
$\mathrm{Preq}$ is an equivalence of categories. Thus one can say that
differential characters classify $S^1$-bundles with connections.

To define the category of differential characters 
(following Hopkins-Singer \cite{HopkinsSinger2005}) 
we start with a complex $DC^{\bullet}_s (M)$ of abelian groups:
\begin{equation}\nonumber
DC^n_s (M) = \{ ( c, h, \omega ) \ | \ \omega = 0 \text{ if } n < s \} 
\subset C^n (M, \mathbb{Z}) \times C^{n-1} (M, \mathbb{R}) 
\times \Omega^n (M) \ ,
\end{equation}
where $C^n (M, R)$ denotes the group of smooth
singular cochains with coefficients in a ring $R$, 
$\Omega^n (M)$ denotes the group of differential forms,
and $s$ is a fixed positive integer (truncation degree).
The differential in the complex $DC^{\bullet}_s (M)$ is given by
\begin{equation}\nonumber
d ( c, h, \omega ) = 
( dc \, ,\  \omega - c- dh \, ,\ d\omega ) \ .
\end{equation}
The group of  Cheeger-Simons differential characters 
\cite{CheegerSimons1985} of degree $2$ on $M$ 
is isomorphic to $H^2 (DC^\bullet_2 (M))$.

The category $\mathcal{DC}^2_2 (M)$ is defined as follows:
\begin{itemize}
\item
Objects are $2$-cocycles: $(c, h, \omega) \in DC^2_2 (M)$, 
$d (c, h, \omega)=0$
\item
Morphisms are 1-cochains up to exact 1-cochains. The set of morphisms
from $(c, h, \omega)$ to $(c', h', \omega')$ is   
\begin{equation}\nonumber
\frac{\{ (a, t, 0) \in DC^1_2 (M) \ | \ d(a, t, 0) = 
(c, h, \omega) - (c', h', \omega')\}}
{(a, t, 0) \sim (a, t, 0) + d (m, 0, 0) 
\text{ for } (m, 0, 0) \in DC^0_2 (M)}  \ .
\end{equation}
\item
Composition of morphisms is the addition in $DC^1_2 (M)$.
\end{itemize}

Note that given a smooth map $\rho: M \rightarrow N$, one can
define natural pull-back functors
$\mathcal{DC}^2_2 ( \rho ): 
\mathcal{DC}^2_2 (N) \rightarrow \mathcal{DC}^2_2 (M)$
and 
$\mathcal{DBS}^1 ( \rho ): 
\mathcal{DBS}^1 (N) \rightarrow \mathcal{DBS}^1 (M)$.
This means that $\mathcal{DC}^2_2$ and $\mathcal{DBS}^1$ 
form presheaves of groupoids over the category of manifolds, 
and it is shown in \cite{LermanMalkinPreq} that these presheaves
are in fact stacks. We don't use stacky language in the present paper
but it is needed for the proof of the prequantization theorem below.

Now we are ready to define the prequantization functor.
Let $\mathcal{DBS}^1_{triv} (M)$ denote the full subcategory
of $\mathcal{DBS}^1 (M)$ consisting of trivial bundles with
arbitrary connections and consider a functor  
$\mathrm{DCh}_{\mathrm{triv}} : 
\mathcal{DBS}_{\mathrm{triv}}^1 \rightarrow \mathcal{DC}_2^2$
given by:
\begin{itemize}
\item
$\mathrm{DCh}_{\mathrm{triv}} ((M \times S^1 , \beta + d\theta )) =
(0, \beta, d \beta)$ on objects. Here $\beta \in \Omega^1 (M)$, 
$\theta \in \mathbb{R}/\mathbb{Z}$ is the
coordinate on $S^1$, and $\beta+d\theta$ is a connection 1-form on
the trivial bundle $M \times S^1 \rightarrow M$. 
\item
$\mathrm{DCh}_{\mathrm{triv}} (f)= [(d(\tilde{f}-f), -\tilde{f}, 0)] 
\in DC^1_2 / d (DC^0_2)$ on morphisms.
Here we think of a morphism in $\mathcal{DBS}_{\mathrm{triv}}^1$ from 
$(M \times S^1 , a + d\theta )$ to $(M \times S^1 , a' + d\theta )$
as a smooth function
$f: M \rightarrow S^1 = \mathbb{R}/\mathbb{Z}$ such that 
$df = a' - a$ and let $\tilde{f} \in C^0 (M, \mathbb{R})$ be 
a lift of $f$. 
\end{itemize}
One of the main results of \cite{LermanMalkinPreq} is:
\begin{theorem}\label{OldPreqThm}There is unique (up to
a natural transformation) equivalence of stacks
\begin{equation}\nonumber
\mathrm{DCh} : \mathcal{DBS}^1 \rightarrow \mathcal{DC}_2^2 \ , 
\end{equation}
extending $\mathrm{DCh}_{\mathrm{triv}}$. In other words, there is 
a family of equivalence functors (one for each manifold):
\begin{equation}\nonumber
\mathrm{DCh} (M) : \mathcal{DBS}^1 (M) \to \mathcal{DC}_2^2 (M) 
\end{equation}
such that
(i) $\mathrm{DCh}$ commutes (up to coherent natural transformations) 
with pull-backs, and 
(ii) on trivial bundles with connections $\mathrm{DCh}$ is equal
to $\mathrm{DCh}_{\mathrm{triv}}$.

We call a quasi-inverse functor of $\mathrm{DCh}$ 
prequantization functor and denote it $\mathrm{Preq}$
(it is defined up to a natural transformation).
\end{theorem}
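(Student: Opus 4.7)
The plan is to construct $\mathrm{DCh}$ from $\mathrm{DCh}_{\mathrm{triv}}$ by stack-theoretic descent, using the fact that both $\mathcal{DBS}^1$ and $\mathcal{DC}_2^2$ are stacks on the site of manifolds and that every $S^1$-bundle with connection is locally trivial. Uniqueness of the extension up to natural transformation will then be forced by the universal property of extending a morphism of stacks that is already pinned down on a generating sub-stack (trivial bundles with connections).

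First I would verify that $\mathrm{DCh}_{\mathrm{triv}}$ is itself a functor commuting with pull-backs. Functoriality is a direct computation: for composable morphisms $f, g : M \to S^1$ with chosen $\mathbb{R}$-valued lifts $\tilde f, \tilde g$, the difference $\widetilde{f+g} - (\tilde f + \tilde g)$ is an integer $0$-cochain, so the assigned $1$-cochains add modulo an exact $1$-cochain of the form $d(m,0,0)$; the identity morphism is sent to $[(0,0,0)]$. Compatibility with pull-back along a smooth map $\rho : M \to N$ is immediate because both the object and the morphism assignments are built from natural operations on forms, cochains, and lifts.

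Next I would perform the descent. Given $(P,A) \in \mathcal{DBS}^1(M)$, pick a trivializing cover $\{U_\alpha\}$ on which $A|_{U_\alpha}$ becomes $\beta_\alpha + d\theta$. Applying $\mathrm{DCh}_{\mathrm{triv}}$ yields local objects $(0, \beta_\alpha, d\beta_\alpha) \in \mathcal{DC}_2^2(U_\alpha)$, and the transition functions $g_{\alpha\beta} : U_{\alpha\beta} \to S^1$ give morphisms $\mathrm{DCh}_{\mathrm{triv}}(g_{\alpha\beta})$ in $\mathcal{DC}_2^2(U_{\alpha\beta})$. Since $\mathrm{DCh}_{\mathrm{triv}}$ is a functor, the cocycle identity $g_{\alpha\beta} g_{\beta\gamma} g_{\gamma\alpha} = 1$ on triple overlaps transports to the corresponding cocycle condition in $\mathcal{DC}_2^2$, so the stack axiom produces an object $\mathrm{DCh}(M)(P,A) \in \mathcal{DC}_2^2(M)$, unique up to canonical isomorphism. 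An analogous descent argument constructs the functor on morphisms, and pull-back compatibility is inherited from the local picture.

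It remains to check that $\mathrm{DCh}$ is an equivalence and that the extension is essentially unique. Since equivalence of stacks is a local property, it suffices to check the statement on contractible opens, where $\mathrm{DCh}_{\mathrm{triv}}$ is already an equivalence onto all of $\mathcal{DC}_2^2$: full faithfulness reduces to the standard bijection between $S^1$-valued functions with prescribed differential and the indicated quotient of $1$-cochains $(a,t,0)$, while essential surjectivity follows from Poincar\'e together with the vanishing of smooth singular cohomology on contractible opens, allowing one to modify any $(c,h,\omega)$ by morphisms in $DC_2^1$ and $DC_2^0$ until it becomes $(0, \beta, d\beta)$. Uniqueness of the extension up to a natural isomorphism is forced by the same descent, since any other extension agrees with $\mathrm{DCh}_{\mathrm{triv}}$ on trivial bundles and therefore, after gluing, on all bundles. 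I expect the main obstacle to be the $2$-categorical bookkeeping in the descent step: arranging the coherence isomorphisms so that the triple-overlap cocycles for morphisms hold strictly, and so that pull-back natural transformations compose coherently, rather than any new geometric input beyond what is already present in $\mathrm{DCh}_{\mathrm{triv}}$.
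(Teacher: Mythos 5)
Your proposal is correct and follows essentially the same route the paper indicates: the theorem is quoted from \cite{LermanMalkinPreq}, and the only argument given here is precisely your idea, namely that every bundle with connection is trivial on a contractible open, so $\mathrm{DCh}_{\mathrm{triv}}$ extends uniquely by descent using the stack property of $\mathcal{DBS}^1$ and $\mathcal{DC}_2^2$, with the equivalence and uniqueness checked locally. The coherence bookkeeping you flag is exactly the content deferred to the earlier paper.
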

The idea behind this theorem is that any bundle becomes
trivial after pull-back to a contractible open subset of $M$.
A similar argument (cf. \cite{LermanMalkinPreq}) 
shows the category $\mathcal{DC}^2_1 (M)$
is equivalent to the category $\mathcal{BS}^1$ of principal $S^1$-bundles
(without connections).

\subsection{Equivariant $S^1$-bundles on groupoids} 
Consider a Lie groupoid $M \leftleftarrows \Gamma$. 
Its nerve is the simplicial manifold
\begin{equation}\nonumber
\xymatrix{
{M} &
\ar@<0.3ex>[l]^{\partial_0 =s}
\ar@<-0.7ex>[l]_{\partial_1 =t}  
{\Gamma} &
\ar@<0.9ex>[l]^{\partial_0}
\ar@<-0.1ex>[l]
\ar@<-1.1ex>[l]_{\partial_2}  
{\Gamma_2} &
\ar@<1.3ex>[l]^{\partial_0}
\ar@<0.3ex>[l]
\ar@<-0.7ex>[l]
\ar@<-1.7ex>[l]_{\partial_3}  
{\cdots}}
\end{equation}
where
\begin{equation}\nonumber
\Gamma_n = \underbrace{
\Gamma \times_{_M} \Gamma \times_{_M} \ldots \times_{_M} \Gamma}_n \ .
\end{equation}
The maps 
$\partial_0 = s$, $\partial_1 =t :  \Gamma \rightarrow M$ are the source and 
the target maps of the groupoid respectively. We use
notations $(s,t)$ and $(\partial_0, \partial_1)$ interchangeably
throughout the paper. The maps 
$\partial_0$, $\partial_n :\Gamma_n \rightarrow \Gamma_{n-1}$
forget the first and the last $\Gamma$-factors respectively, and 
finally, $\partial_i :\Gamma_n \rightarrow \Gamma_{n-1}$ for
$1 \leq i \leq n-1$ multiplies (composes) $i^{th}$ and $(i+1)^{th}$ factors.
We omit face maps $\Gamma_{n-1} \rightarrow \Gamma_{n}$ (in particular
the unit map for the groupoid) in the above diagram. 

A principal $S^1$-bundle $P \rightarrow M$ is 
$\Gamma$-equivariant if one can lift it to a simplicial principal 
$S^1$-bundle
\begin{equation}\nonumber
\xymatrix{ 
\ar[d]_{\pi^{_M}} {P} &
\ar@<-0.0ex>[l]^{\partial^P_0}
\ar@<-0.8ex>[l]_{\partial^P_1}  
\ar[d]^{\pi^{_\Gamma}} {P_1} &
\ar@<0.9ex>[l] \ar@<-0.1ex>[l] \ar@<-1.1ex>[l] 
\ar[d]^{\pi^{_{\Gamma_2}}}
{P_2} &
\ar@<1.3ex>[l]
\ar@<0.3ex>[l]
\ar@<-0.7ex>[l]
\ar@<-1.7ex>[l]  
{\cdots} \\
{M} &
\ar@<0.0ex>[l]^{\partial^M_0}
\ar@<-0.8ex>[l]_{\partial^M_1}  
{\Gamma} &
\ar@<0.9ex>[l]
\ar@<-0.1ex>[l]
\ar@<-1.1ex>[l]  
{\Gamma_2} &
\ar@<1.3ex>[l]
\ar@<0.3ex>[l]
\ar@<-0.7ex>[l]
\ar@<-1.7ex>[l]  
{\cdots}}
\end{equation}
More precisely, such a lift provides an equivariant structure
on the bundle. An equivalent definition of an equivariant
structure on $P$ is a bundle isomorphism 
$\varphi: (\partial^M_1)^{\ast} P \xrightarrow{\sim} (\partial^M_0)^{\ast} P$
satisfying a cocycle condition on $\Gamma_2$. 
In the case of an action
groupoid $M \leftleftarrows G \times M$ an equivariant structure 
on a bundle $P \rightarrow M$ is equivalent to a lift of the $G$-action on M
to an action on the total space $P$.

\subsection{Connections on equivariant $S^1$-bundles} 
Now suppose we are given a connection $A$ on the bundle $P \rightarrow M$.
We think of $A$ as a 1-form on $P$. The connection
$A$ is $\Gamma$-basic (with respect to a given equivariant
structure on $P$) if $(\partial_1^P)^{\ast} A = (\partial_0^P)^{\ast} A$.
In the case of an action groupoid $M \leftleftarrows G \times M$
a connection is basic if it is $G$-invariant and vanish on vector
fields generating the infinitesimal action of the Lie algebra
$\mathfrak{g}$ of $G$. 
Equivariant $S^1$-bundles with basic connections
form a category $\mathcal{DBS}^1(M \leftleftarrows \Gamma )$
with morphisms being equivariant bundle maps preserving connections. 
In fact (cf. \cite{LermanMalkinPreq}) this category 
(up to an equivalence) depends only on the stack $[M / \Gamma]$,
not on the actual (atlas) groupoid $M \leftleftarrows \Gamma$. 
In other words, it is Morita-equivariant. 

In the present paper we are interested
in arbitrary (not necessarily basic) connections on
an equivariant $S^1$-bundle. So we introduce a category
\mbox{$d\mathcal{BS}^1 (M \leftleftarrows \Gamma )$} of
$\Gamma$-equivariant bundles on $M$ with arbitrary connections. 
Morphisms in \mbox{$d\mathcal{BS}^1 (M \leftleftarrows \Gamma )$} are
equivariant bundle maps preserving connections.
Note that $\mathcal{DBS}^1(M \leftleftarrows \Gamma )$
is a full subcategory of $d\mathcal{BS}^1(M \leftleftarrows \Gamma )$.

\subsection{Singular and de Rham cohomology of groupoids}
Let us recall the definitions of singular and de Rham complexes 
on a groupoid $M \leftleftarrows \Gamma$. We refer the reader
to \cite{Behrend2004} for details. The de Rham complex 
$\Omega^{\bullet} (M \leftleftarrows \Gamma)$ is the total 
complex of the following double complex 
(denoted by $\Omega^{\bullet} (\Gamma_{\bullet})):$
\begin{equation}\nonumber
\xymatrix{
{\vdots} & {\vdots} & \\
{\Omega^1 ( M )} 
\ar[r]^-{\delta} \ar[u]^{d} &
{\Omega^1 ( \Gamma )} 
\ar[r]^-{\delta} \ar[u]^{-d} &
{\cdots}
\\
{\Omega^0 ( M )} 
\ar[r]^-{\delta} \ar[u]^{d} &
{\Omega^0 ( \Gamma )}
\ar[r]^-{\delta} \ar[u]^{-d} &
{\cdots}
\\  
{M} &
\ar@<0.3ex>[l]^{\partial_0}
\ar@<-0.7ex>[l]_{\partial_1}  
{\Gamma} &
\ar@<0.9ex>[l]^{\partial_0}
\ar@<-0.1ex>[l]
\ar@<-1.1ex>[l]_{\partial_2}    
{\cdots}}
\end{equation}
where $\delta$ is the simplicial differential: 
$\delta=\sum (-1)^i \partial^{\ast}_i$. Analogously, given a ring $R$, 
the smooth singular cochain complex $C^{\bullet} (M \leftleftarrows \Gamma , R)$
is defined as the total complex of the double complex
$C^{\bullet} (\Gamma_{\bullet}, R)$. 
One can also define singular chains complex in a similar fashion
(with arrows in the opposite direction). Smooth singular chains are dual to
smooth singular cochains. The usual de Rham Theorem
on manifolds implies (by a spectral sequence argument)
de Rham Theorem on groupoids: 
$H^n (C^{\bullet} (M \leftleftarrows \Gamma , \mathbb{R}))$
is isomorphic to 
$H^n (\Omega^{\bullet} (M \leftleftarrows \Gamma ))$.
The isomorphism is provided by integrating forms over smooth chains.

\subsection{Differential characters on groupoids}
\label{CharDef}
Similarly to singular and de Rham cochain complexes on a groupoid
$M \leftleftarrows \Gamma$ one can define the complex of
differential characters $DC_s^{\bullet} (M \leftleftarrows \Gamma )$
as the total complex of the double complex
$DC_s^{\bullet} (\Gamma_{\bullet})$. It is shown in
\cite{LermanMalkinPreq} that, provided $s > 0$, the total
cohomology of  $DC_s^{\bullet} (\Gamma_{\bullet})$ is equal to
the total cohomology of its truncated version

\begin{equation}\nonumber
\xymatrix{
{\vdots} & {\vdots} & \\
{DC_s^2 ( M )} 
\ar[r]^-{\delta} \ar[u]^{d} &
{DC_s^2 ( \Gamma )} 
\ar[r]^-{\delta} \ar[u]^{-d} &
{\cdots}
\\
{\frac{DC_s^1 ( M )}{d \ DC_s^0 ( M )}} 
\ar[r]^-{\delta} \ar[u]^{d} &
{\frac{DC_s^1 ( \Gamma )}{d \ DC_s^0 ( \Gamma )}}
\ar[r]^-{\delta} \ar[u]^{-d} &
{\cdots}}
\end{equation}
which we'll use from now on.
Translating complexes into categories,
we get a category 
$\mathcal{DC}^2_s (M \leftleftarrows \Gamma)$
associated to the second total cohomology
of the above (truncated) complex 
(2-cocycles are objects, 1-cochains 
modulo exact 1-cochains are morphisms). 
A standard stacky argument (cf. \cite{LermanMalkinPreq}) allows one
to extend the prequantization functor (on manifolds) 
to an equivalence 
\begin{equation}\nonumber
\mathrm{Preq}: \ \mathcal{DC}^2_2 (M \leftleftarrows \Gamma)
\xrightarrow{\sim} \mathcal{DBS}^1 (M \leftleftarrows \Gamma) 
\ .
\end{equation}
Similarly $\mathcal{DC}^2_1 (M \leftleftarrows \Gamma)$
is equivalent to the category $\mathcal{BS}^1 (M \leftleftarrows \Gamma)$
of equivariant $S^1$-bundles.
However presently we are interested in the intermediate category 
$d\mathcal{BS}^1 (M \leftleftarrows \Gamma )$ of
equivariant bundles with arbitrary connections. So we consider a
double complex 
\begin{equation}\nonumber
\xymatrix{
{\vdots} & {\vdots} & \\
{DC_2^2 ( M )} 
\ar[r]^-{\delta} \ar[u]^{d} &
{DC_1^2 ( \Gamma )} 
\ar[r]^-{\delta} \ar[u]^{-d} &
{\cdots}
\\
{\frac{DC_2^1 ( M )}{d \ DC_2^0 ( M )}} 
\ar[r]^-{\delta} \ar[u]^{d} &
{\frac{DC_1^1 ( \Gamma )}{d \ DC_1^0 ( \Gamma )}}
\ar[r]^-{\delta} \ar[u]^{-d} &
{\cdots}}
\end{equation}
Note that truncation degree for differential forms is 2
in the first column and 1 in the second. This corresponds to the fact
that equivariant structure on our bundles (i.e. isomorphism of
the two pull-backs to $\Gamma$) does not necessarily preserve 
connection. Roughly speaking, we would like our characters to
correspond to bundles with connections on $M$ and bundles without
connections on $\Gamma$.
We denote the above complex $DC^2_{2-1} (M \leftleftarrows \Gamma)$
and the corresponding category of total degree-2 cochains 
$\mathcal{DC}^2_{2-1} (M \leftleftarrows \Gamma)$.
The complex $DC^2_{2-1} (M \leftleftarrows \Gamma)$ 
contains  
$DC^2_2 (M \leftleftarrows \Gamma)$ as a subcomplex,
and the category $\mathcal{DC}^2_{2-1} (M \leftleftarrows \Gamma)$ 
contains  
$\mathcal{DC}^2_2 (M \leftleftarrows \Gamma)$ as a full subcategory.
Explicitly, a degree-2 cochain in 
$DC^2_{2-1} (M \leftleftarrows \Gamma)$ 
(i.e an object of 
\mbox{$\mathcal{DC}^2_{2-1} (M \leftleftarrows \Gamma)$)}
looks like
\begin{equation}\nonumber
\xymatrix{
{(c, h, \omega )} &
{\bullet} & 
\\
{\bullet} &
{[(b, f, \alpha )]} & 
}
\end{equation}
where $c \in C^2 (M, \mathbb{Z})$, 
$b \in C^1 (\Gamma , \mathbb{Z})$,  
$h \in C^1 (M, \mathbb{R})$, 
$f \in C^0 (\Gamma , \mathbb{R})$, 
$\omega \in \Omega^2 (M )$, 
$\alpha \in \Omega^1 (\Gamma )$, and
$[(b,f,\alpha )]$ means an equivalence class with respect to
the relation 
$(b,f,\alpha ) \sim (b + dn, f - n , \alpha )$,
$n \in C^0 (\Gamma , \mathbb{Z})$.
We write this cochain as 
$((c, h, \omega ),[(b,f,\alpha )])$. It belongs to
$DC^2_2 (M \leftleftarrows \Gamma)$ iff $\alpha=0$. 

A degree-1 cochain in 
$DC^2_{2-1} (M \leftleftarrows \Gamma)$ 
(i.e a morphism in $\mathcal{DC}^2_{2-1} (M \leftleftarrows \Gamma)$)
looks like
\begin{equation}\nonumber
\xymatrix{
{\bullet} &
{\bullet} & 
\\
{[(a, t, 0 )]} &
{\bullet} & 
} 
\end{equation}
with $a \in C^1 (M, \mathbb{Z})$, 
$t \in C^1 (M, \mathbb{R})$, and
$(a, t, 0 ) \sim (a + dm, t - m , 0 )$ for
$m \in C^0 (M , \mathbb{Z})$.

The first result of the present paper is the following 
prequantization theorem on groupoids:

\begin{theorem}\label{InvPreq}
The prequantization functor
\begin{equation}\nonumber
\mathrm{Preq}: \ \mathcal{DC}^2_2 (M \leftleftarrows \Gamma)
\xrightarrow{\sim} \mathcal{DBS}^1 (M \leftleftarrows \Gamma)
\end{equation}
extends to an equivalence of categories
\begin{equation}\nonumber
\mathrm{Preq}: \ \mathcal{DC}^2_{2-1} (M \leftleftarrows \Gamma)
\xrightarrow{\sim} d\mathcal{BS}^1 (M \leftleftarrows \Gamma)
\end{equation}
such that if 
\begin{equation}\nonumber
\mathrm{Preq} ((c, h, \omega ), [(b, f, \alpha ]) = 
(P \stackrel{\pi}{\rightarrow} M,A)
\end{equation}
then $\omega$ is the curvature of the connection $A$, while
$\alpha$ is the difference of the two pull-backs of $A$ to $P_1$
considered as a 1-form on $\Gamma:$
\begin{equation}\nonumber
\pi^* \alpha = \partial_1^* A - \partial_0^* A \ .
\end{equation}
In particular the second total cohomology group
of the complex 
$DC^{\bullet}_{2-1} (M \leftleftarrows \Gamma)$ 
classifies equivariant principal $S^1$-bundles on
$M \leftleftarrows \Gamma$ with arbitrary connections.
\end{theorem}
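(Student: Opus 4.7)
The plan is to reduce to the two manifold-level equivalences already in hand: Theorem~\ref{OldPreqThm} gives $\mathrm{DCh}:\mathcal{DBS}^1(M)\xrightarrow{\sim}\mathcal{DC}^2_2(M)$, while the remark at the end of Subsection~\ref{PreqDef} provides an equivalence $\mathcal{DC}^2_1(M)\simeq\mathcal{BS}^1(M)$ for $S^1$-bundles \emph{without} connections. These are the two column-wise pieces one must weave together in the double complex $DC^{\bullet}_{2-1}(\Gamma_\bullet)$, and the point of the non-uniform truncation (level $2$ on $M$, level $1$ on $\Gamma$) is precisely that the equivariant structure on the bundle need only be an isomorphism of bundles, not of bundles-with-connection, on $\Gamma$.

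First I would unpack the three cocycle equations that a total degree-$2$ element $((c,h,\omega),[(b,f,\alpha)])$ must satisfy: $d(c,h,\omega)=0$ in the $M$-column, $\delta[(b,f,\alpha)]=0$ at $\Gamma_2$, and the mixed equation $\delta(c,h,\omega)=d[(b,f,\alpha)]$ on $\Gamma$. By Theorem~\ref{OldPreqThm} the first produces a bundle with connection $(P,A)$ on $M$ whose curvature is $\omega$. By the $\mathcal{DC}^2_1\simeq\mathcal{BS}^1$ equivalence applied to the $\Gamma$-column, with the cocycle condition on $\Gamma_2$ supplying associativity, the class $[(b,f)]$ produces an equivariant structure $\varphi:\partial_1^*P\xrightarrow{\sim}\partial_0^*P$. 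Unwinding the mixed equation then gives exactly the asserted formula $\pi^*\alpha=\partial_1^*A-\partial_0^*A$: the horizontal, $S^1$-invariant $1$-form $\partial_1^*A-\varphi^*\partial_0^*A$ on $P_1$ descends to $\alpha\in\Omega^1(\Gamma)$.

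To promote this pointwise description to a functor I would follow the local-to-global descent strategy of \cite{LermanMalkinPreq}. On trivial data---a bundle $U\times S^1$ with connection $\beta+d\theta$ over an open $U\subset M$ and equivariant structure given by $g:\Gamma|_U\to S^1$ with real lift $\tilde g$---the assignment $((0,\beta,d\beta),[(0,\tilde g,\delta\beta+d\tilde g)])$ is a cocycle, extends $\mathrm{DCh}_{\mathrm{triv}}$ on the $M$-column, and restricts to the already-established functor on the full subcategory where $\alpha=0$. Since both $\mathcal{DC}^2_{2-1}$ and $d\mathcal{BS}^1$ form stacks over manifolds (the argument of \cite{LermanMalkinPreq} for the basic case adapts verbatim, the only change being the truncation level), this trivial-case assignment extends uniquely up to natural transformation. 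Essential surjectivity follows by inverting the construction on any local trivialization, and full faithfulness reduces to Theorem~\ref{OldPreqThm} since degree-$1$ cochains in $\mathcal{DC}^2_{2-1}$ live only in bidegree $(0,1)$ and hence carry no $\alpha$-component.

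The step I expect to be the main obstacle is the bookkeeping around the non-uniform truncation. In the basic case the functor $\mathrm{DCh}$ applies a single column-wise equivalence on every $\Gamma_n$ and a double-complex spectral sequence collapses the descent argument cleanly; here one must check that truncating the $\Gamma$-column from $s=2$ to $s=1$ still yields a stack, and that the equivalence $(b,f,\alpha)\sim(b+dn,f-n,\alpha)$ matches exactly the ambiguity in lifting an $S^1$-valued transition function on $\Gamma$ to a real-valued one, so that the $\alpha$-component patches correctly. Once this bookkeeping is in place, the remaining checks---cocycle identities, functoriality, and the explicit curvature and discrepancy formulas---reduce to direct local calculations.
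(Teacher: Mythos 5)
Your proposal is essentially correct in its analysis of the cocycle data, but it takes a genuinely different route from the paper, and the route you choose contains the one step you yourself flag as dangerous. The paper's proof is global and needs no new descent argument: given $(P,A,\sigma)$ it applies the already-constructed manifold-level functor $\mathrm{DCh}$ twice --- once on $M$ to the pair $(P,A)$, producing $(c,h,\omega)$, and once on $\Gamma$ to the trivialization $\sigma:\Gamma\times S^1\xrightarrow{\sim}\partial_1^*P\otimes(\partial_0^*P)^{-1}$ of the \emph{ratio} bundle equipped with the difference connection $\partial_1^*A-\partial_0^*A=\sigma_*(\alpha+d\theta)$. Since $\sigma$ is a morphism between a trivial bundle and the ratio bundle, the explicit formula $\mathrm{DCh}_{\mathrm{triv}}$ applies and yields $[(b,f,0)]$ with $d(b,f,0)=\delta(c,h,\omega)-(0,\alpha,d\alpha)$; the equation $\delta[(b,f,\alpha)]=0$ then comes for free from the cocycle condition on $\sigma$ and strict compatibility of $\mathrm{DCh}$ with pullbacks on morphisms. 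Your approach instead rebuilds the functor from scratch on locally trivial data and invokes a stack/descent argument for the mixed-truncation complex, claiming the argument of \cite{LermanMalkinPreq} ``adapts verbatim.'' That is the weak point: $d\mathcal{BS}^1(-\leftleftarrows-)$ is \emph{not} invariant under refinement of $M$ by an open cover $U=\sqcup U_i$ (an object of $d\mathcal{BS}^1(U\leftleftarrows\Gamma_U)$ has gluing over the overlaps $U_i\cap U_j\subset\Gamma_U$ that need not preserve the connection, so it does not descend to a connection on $M$), consistent with the paper's warning that these categories are not atlas-independent. Your descent therefore has to be formulated with respect to a hybrid site --- connection-preserving gluing in the $M$-direction, arbitrary gluing in the $\Gamma$-direction --- which is exactly what the mixed truncation encodes but is not a verbatim repetition of the basic case. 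This can be carried out, but it is real work that your sketch defers; the paper's trick of packaging the equivariant structure as a trivialization of the ratio bundle avoids it entirely. Your local formula $((0,\beta,d\beta),[(0,\tilde g,\delta\beta+d\tilde g)])$ and the identification of full faithfulness with the manifold-level statement (since degree-$1$ cochains carry no $\alpha$-component) are consistent with the paper up to sign conventions.
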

\begin{proof}
We give an explicit construction of the quasi-inverse functor
\begin{equation}\nonumber
\mathrm{DCh}: \ d\mathcal{BS}^1 (M \leftleftarrows \Gamma)
\rightarrow \mathcal{DC}^2_{2-1} (M \leftleftarrows \Gamma)
\end{equation}
of the extended prequantization functor $\mathrm{Preq}$. 
Let us begin with objects. Given an 
equivariant bundle $P \rightarrow M$ with a connection
$A \in \Omega^1 (P)$ we apply $\mathrm{DCh}$-functor on $M$
(cf. Subsection \ref{OldPreqThm}) to get a cochain 
$(c,h, \omega) = \mathrm{DCh} \bigl( (P, A) \bigr) \in DC^2_2 (M)$ 
such that
\begin{equation}\label{dCDW}
d(c, h, \omega) = 0 \ .
\end{equation}
Now the equivariant 
structure on $P$ is a bundle isomorphism 
$\partial_1^* P \xrightarrow{\sim} \partial_0^* P$
or a trivialization 
$\sigma: \Gamma \times S^1 \xrightarrow{\sim} 
\partial_1^* P \otimes (\partial_0^* P)^{-1}$.
We put $\sigma^* ( \partial_1^* A - \partial_0^* A) = \alpha + d \theta$
where $\alpha \in \Omega^1 (\Gamma)$.
Applying $\mathrm{DCh}$-functor on $\Gamma$ 
to the isomorphism $\sigma$ and using the 
explicit form of $\mathrm{DCh}$ for trivial bundles with connections 
(cf. Subsection \ref{PreqDef}) we get (a class of) cochain 
$\mathrm{DCh} (\sigma )=[(b, f, 0)] \in  DC^1_2 ( \Gamma )/DC^1_2 ( \Gamma )$ 
such that  
\begin{equation}\label{DCiso1}
d(b, f, 0) = (\partial_1^* (c, h, \omega) - \partial_0^* (c, h, \omega))
-(0, \alpha, d \alpha) \ .
\end{equation}
Note that $[(b, f, 0)]$ encodes not 
only the trivialization $\sigma$ but also natural transformations 
coming from the fact that the functor $\mathrm{DCh}$ commutes only weakly
with pull-backs and products. In any case, \eqref{DCiso1} means that
\begin{equation}\label{dCDWdBUA}
d (b, f, \alpha ) = \delta (c, h, \omega) \ .
\end{equation} 
Since $\sigma$ satisfies the cocycle condition on $\Gamma_2$ 
and $\mathrm{DCh}$ commutes strongly with pull-backs on the level of morphisms, 
we get 
\begin{equation}\label{dBUA}
\delta [(b, f, \alpha )] = 0 \ .
\end{equation} 
Equations \eqref{dCDW}, \eqref{dCDWdBUA}, \eqref{dBUA}, mean that 
$((c, h, \omega), [(b, f, \alpha )])$ is a cocycle in
$\mathcal{DC}^2_{2-1} (M \leftleftarrows \Gamma)$ and we put 
\begin{equation}\nonumber
\mathrm{DCh} ((P, A, \sigma )) = ((c, h, \omega), [(b, f, \alpha )])
\end{equation}
This completes construction of $\mathrm{DCh}$ on objects
of $d\mathcal{BS}^1 (M \leftleftarrows \Gamma)$. 
Note that the differential form parts $\omega$ and $\alpha$ of the cocycle 
$((c, h, \omega), [(b, f, \alpha ))])$ correspond precisely 
to their description in the theorem. In particular
$A$ is basic iff $\alpha = 0$. Hence the restriction
of $\mathrm{DCh}$ to the subcategory 
$\mathcal{DBS}^1 \subset d\mathcal{BS}^1$ takes values in
$\mathcal{DC}_2^2 \subset d\mathcal{DC}_{2-1}^2$ and
coincides with the equivariant $\mathrm{DCh}$ of 
\cite{LermanMalkinPreq}.

Since morphisms in $\mathcal{DC}^2_{2-1} (M \leftleftarrows \Gamma)$
(resp. $d\mathcal{BS}^1 (M \leftleftarrows \Gamma)$) are the same as
morphisms in $\mathcal{DC}^2_{2} (M)$
(resp. $\mathcal{DBS}^1 (M)$) we just use the original
functor $\mathrm{DCh}$ on $M$ for morphisms.

The above construction is clearly reversible 
(using $\mathrm{Preq}$-functors on $M$ and $\Gamma$ as
quasi-inverses of $\mathrm{DCh}$-functors). 
Hence the functor 
\begin{equation}\nonumber
\mathrm{DCh}: \ d\mathcal{BS}^1 (M \leftleftarrows \Gamma)
\rightarrow \mathcal{DC}^2_{2-1} (M \leftleftarrows \Gamma)
\end{equation}
is an equivalence of categories and we denote the quasi-inverse functor 
$\mathrm{Preq}$. Note that morphisms in the categories
$d\mathcal{BS}^1 (M \leftleftarrows \Gamma)$
and $\mathcal{DC}^2_{2-1} (M \leftleftarrows \Gamma)$
change neither the connection $A$ nor differential form
parts $\omega$ and $\alpha$ of the cocycle. Therefore the condition
of the theorem is satisfied for any choice of the quasi-inverse 
functor.
\end{proof}

\subsection{Equivariant Kostant Theorem}\label{KostantSubSection}
Recall that Kostant's prequantization theorem on a manifold $M$
states that, given a closed 2-form $\omega \in \Omega^2 (M)$ with 
integral periods, 
there exists a principal $S^1$-bundle on $M$ with a connection
whose curvature is equal to $\omega$. Moreover the set of such
bundles with connections is in bijection with 
the cohomolgy $H^1 (C^{\bullet} (M, \mathbb{R}/\mathbb{Z}))$ of the complex 
of smooth singular cochains with values in $\mathbb{R}/\mathbb{Z}$. 

In this subsection we prove an equivariant version of 
Kostant theorem on a Lie groupoid $M \leftleftarrows \Gamma$.
Denote 
$\Omega_{0}^2 (M \leftleftarrows \Gamma ) \subset 
\Omega^2 (M \leftleftarrows \Gamma )  = 
\Omega^2 (M) \times \Omega^1 (\Gamma) \times \Omega^0 (\Gamma_2)$
the group of closed 2-forms on $M \leftleftarrows \Gamma$
which are integral (i.e. have integral periods or, equivalently, 
represent images of integral cohomology classes in de Rham cohomology)
and have vanishing $\Omega^0 ( \Gamma_2 )$-component. 
Suppose 
$\Theta = (\omega , \alpha, 0) \in \Omega^2_0 \subset 
\Omega^2 (M) \times \Omega^1 (\Gamma) \times \Omega^0 (\Gamma_2)$.
Since $\Theta$ represents the image of an integral cohomology class
in real smooth singular cohomology 
there exists an integral cochain 
$(c , b , a ) \in C^2 ( M \leftleftarrows \Gamma , \mathbb{Z}) =
C^2 (M , \mathbb{Z}) \times C^1 (\Gamma , \mathbb{Z} )  
\times C^0 (\Gamma_2 , \mathbb{Z})$ 
and a real cochain 
$(h , f ) \in C^1 ( M \leftleftarrows \Gamma , \mathbb{R}) =
C^1 (M , \mathbb{R}) \times C^0 (\Gamma , \mathbb{R})$
such that 
$d_{\mathrm{tot}} ( h, f )  = (\omega , \alpha , 0)  - 
(c , b , a )$, where $d_{\mathrm{tot}}$ is the total differential
in the double complex of smooth singular chains on 
$M \leftleftarrows \Gamma$. This implies that the projection map
\begin{gather}\nonumber
\eta: 
H^2 (DC_{2-1}^{\bullet} ( M \leftleftarrows \Gamma )) \rightarrow 
\Omega^2_0 (M \leftleftarrows \Gamma )
\\\nonumber
\eta \ ( \, [ ( (c ,h ,\omega ), 
(b, f , \alpha), (a , 0, 0) )] \, ) = (\omega , \alpha, 0)
\end{gather}
is surjective, and it is easy to see that the kernel of $\eta$ is
isomorphic 
(by the map $
\bigl[ \bigl( (c ,h , 0 ), 
(b, f , 0), (a , 0, 0) \bigr) \bigr] \mapsto
[(h \ \mathrm{mod} \, \mathbb{Z}, \, f \ \mathrm{mod} \, \mathbb{Z})]$
to the cohomology group 
$H^1 (C^\bullet (M \leftleftarrows \Gamma, \mathbb{R}/\mathbb{Z}))$ 
of smooth singular 1-cochains with values in 
$\mathbb{R}/\mathbb{Z}$. Hence we obtain a short exact sequence
\begin{equation}\label{KostantSequence}
0
\rightarrow 
H^1 (C^\bullet (M \leftleftarrows \Gamma , \mathbb{R}/\mathbb{Z}))
\rightarrow
H^2 (DC_{2-1}^{\bullet} ( M \leftleftarrows \Gamma ) )
\rightarrow 
\Omega^2_0 (M \leftleftarrows \Gamma )
\rightarrow
0 \ .
\end{equation}
This sequence (in the manifold case) first appeared in the original
Cheeger and Simons paper \cite{CheegerSimons1985}.
Since $H^2 (DC_{2-1}^{\bullet} ( M \leftleftarrows \Gamma ))$
classifies equivariant principal $S^1$-bundles
on $M \leftleftarrows \Gamma$
with arbitrary connections (cf. Theorem \ref{InvPreq}), 
we can interpret \eqref{KostantSequence} as follows:

\begin{theorem}
Let $\Theta = (\omega , \alpha, 0) 
\in \Omega^2 ( M \leftleftarrows \Gamma ) =
\Omega^2 (M) \times \Omega^1 (\Gamma) \times \Omega^0 (\Gamma_2)$
be an integral closed 2-form on a Lie groupoid $M \leftleftarrows \Gamma$. 
Then there exists a $\Gamma$-equivariant principal $S^1$-bundle 
$\pi: P \rightarrow M$ with a connection $A \in \Omega^1 (P)$ such that 
$\omega$ is the curvature of the connection $A$ and
$\pi^* \alpha = \partial_1^* A - \partial_0^* A$.
Moreover the set of isomorphism 
classes of such pairs $(P,A)$ is in bijection with 
$H^1 (C^\bullet( M \leftleftarrows \Gamma  , \mathbb{R}/\mathbb{Z}))$.
\end{theorem}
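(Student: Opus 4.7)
The plan is to deduce this theorem by combining the equivalence of Theorem \ref{InvPreq} with the short exact sequence \eqref{KostantSequence} that has just been constructed. By Theorem \ref{InvPreq}, isomorphism classes of $\Gamma$-equivariant $S^1$-bundles with arbitrary connections on $M \leftleftarrows \Gamma$ are in natural bijection with $H^2(DC^\bullet_{2-1}(M \leftleftarrows \Gamma))$, and under this bijection the class of a cocycle $((c,h,\omega),[(b,f,\alpha)])$ corresponds to a pair $(P,A)$ whose curvature is $\omega$ and which satisfies $\pi^*\alpha = \partial_1^*A - \partial_0^*A$. Thus the map $\eta$ in \eqref{KostantSequence} is precisely the ``extract $(\omega,\alpha,0)$'' map at the level of geometric objects, and the theorem becomes a translation of \eqref{KostantSequence} through this identification.

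For the existence statement I would argue as follows. Given an integral closed $\Theta=(\omega,\alpha,0)\in\Omega^2_0(M\leftleftarrows\Gamma)$, integrality plus the de Rham theorem on groupoids (via the spectral sequence reviewed earlier) yields an integral singular cocycle $(c,b,a)$ and a real cochain $(h,f)$ with $d_{\mathrm{tot}}(h,f)=(\omega,\alpha,0)-(c,b,a)$. Repackaging this data as $((c,h,\omega),[(b,f,\alpha)],(a,0,0))$ gives a total $2$-cocycle of $DC^\bullet_{2-1}(M\leftleftarrows\Gamma)$ mapping to $\Theta$ under $\eta$, i.e.\ $\eta$ is surjective. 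Applying $\mathrm{Preq}$ produces a bundle-with-connection $(P,A)$ whose curvature is $\omega$ and whose equivariant-structure defect is $\alpha$, as required.

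For the counting statement I would invoke the identification of $\ker\eta$ with $H^1(C^\bullet(M\leftleftarrows\Gamma,\mathbb{R}/\mathbb{Z}))$ already indicated: an element of $\ker\eta$ is represented by a cocycle of the form $((c,h,0),[(b,f,0)],(a,0,0))$, and sending it to $[(h\bmod\mathbb{Z},f\bmod\mathbb{Z})]$ is an isomorphism. The point is that on cochains with vanishing differential-form components the total differential of $DC^\bullet_{2-1}$ degenerates to the ordinary singular differential coupled with the simplicial $\delta$, and modding out by $\mathbb{Z}$ converts the condition $d_{\mathrm{tot}}(h,f)=(c,b,a)$ into the $\mathbb{R}/\mathbb{Z}$-cocycle condition on $(h,f)$ and the equivalence relation $(b,f,\alpha)\sim(b+dn,f-n,\alpha)$ into the standard coboundary relation. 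Combining surjectivity of $\eta$ with this kernel description gives that the preimage $\eta^{-1}(\Theta)$ is a torsor over $H^1(C^\bullet(M\leftleftarrows\Gamma,\mathbb{R}/\mathbb{Z}))$, which is exactly the second claim of the theorem.

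The only substantive work is verifying exactness of \eqref{KostantSequence}, and within that the delicate step is showing the kernel isomorphism is well-defined and bijective: one must check that changing the lift $(c,b,a),(h,f)$ of $\Theta$ or altering $(b,f,0)$ by $(dn,-n,0)$ translates into the appropriate coboundary in $C^\bullet(M\leftleftarrows\Gamma,\mathbb{R}/\mathbb{Z})$, and conversely that any $\mathbb{R}/\mathbb{Z}$-class lifts back to a $DC^\bullet_{2-1}$-cocycle with zero differential-form part. This is straightforward bookkeeping in the double complex once one is careful about the truncation conventions (form-degree $\geq 2$ over $M$, $\geq 1$ over $\Gamma$, $\geq 0$ over $\Gamma_2$) and the equivalence relation built into the category $\mathcal{DC}^2_{2-1}$; no new geometric input beyond Theorem \ref{InvPreq} is needed.
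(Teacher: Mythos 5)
Your proposal is correct and follows essentially the same route as the paper: the paper's own argument is precisely to establish surjectivity of $\eta$ from integrality via the de Rham theorem on groupoids, identify $\ker\eta$ with $H^1(C^\bullet(M\leftleftarrows\Gamma,\mathbb{R}/\mathbb{Z}))$, and then read the resulting short exact sequence \eqref{KostantSequence} through the classification of Theorem \ref{InvPreq}. The only cosmetic difference is that you phrase the fiber $\eta^{-1}(\Theta)$ as a torsor rather than a bijection, which amounts to the same statement.
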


\noindent
For proper groupoids this theorem was proved (by a different argument) in 
\cite{BehrendXu2006}. 

\section{Invariant connections and the moment map}  
\label{InvariantSection}

\noindent
In this section we identify differential characters corresponding to 
invariant connections.

\subsection{The moment map}\label{MomentDef}
We again think of an equivariant principal $S^1$-bundle on
a groupoid $M \leftleftarrows \Gamma$ as a simplicial bundle
\begin{equation}\nonumber
\xymatrix{ 
\ar[d]_{\pi^{_M}} {P} &
\ar@<-0.0ex>[l]
\ar@<-0.8ex>[l]  
\ar[d]^{\pi^{_\Gamma}} {P_1} &
\ar@<0.9ex>[l] \ar@<-0.1ex>[l] \ar@<-1.1ex>[l] 
\ar[d]^{\pi^{_{\Gamma_2}}}
{P_2} &
\ar@<1.3ex>[l]
\ar@<0.3ex>[l]
\ar@<-0.7ex>[l]
\ar@<-1.7ex>[l]  
{\cdots} \\
{M} &
\ar@<0.0ex>[l]
\ar@<-0.8ex>[l]  
{\Gamma} &
\ar@<0.9ex>[l]
\ar@<-0.1ex>[l]
\ar@<-1.1ex>[l]  
{\Gamma_2} &
\ar@<1.3ex>[l]
\ar@<0.3ex>[l]
\ar@<-0.7ex>[l]
\ar@<-1.7ex>[l]  
{\cdots}}
\end{equation}

Recall (see e.g. \cite{MoerdijkMrcun2003}) that the Lie algebroid
$\mathfrak{P}$ of the Lie groupoid $P \leftleftarrows P_1$ is a
vector bundle over $P$ whose fiber at $p \in P$ is equal
to $\mathrm{Ker} \, ds \subset T_pP_1$ 
(here we consider $P$ as a subset of $P_1$
via the unit map). Another way to think of sections of $\mathfrak{P}$
is as right-invariant vector fields 
in $\mathrm{Ker} \, ds \subset TP_1$.
The Lie algebroid $\mathfrak{P}$ comes equipped with a bundle map
$\rho : = dt : \mathfrak{P} \rightarrow TP$ called the anchor.

Let $\mathfrak{P}^*$ be the dual bundle of $\mathfrak{P}$. Given
a connection $A \in \Omega^1 (P)$ we define
the \emph{moment map} $\mu^P_A$ as a section of $\mathfrak{P}^*$ 
given by:
\begin{equation}\nonumber
\mu^P_A ( X ) = A ( \rho (X) ) 
\end{equation}
for a section $X$ of $\mathfrak{P}$. One can rewrite this definition as
\begin{multline}\label{MuDA}
\mu^P_A ( X ) = A ( dt (X) ) = A ( dt (X) - ds (X)) 
= \\ = 
(dt^* (A) - ds^* (A) ) (X) = (\delta A) (X) \ .
\end{multline}
Here $\delta$ is, as usual, the simplicial differential, 
and we think of $X$ as a right-invariant section of 
$\mathrm{Ker} \  ds \subset TP_1$
and of $\mu^P_A (X)$ as a right-invariant function on $P_1$, 
hence a function on $P$. 

Since a connection on and all the structure maps of an 
simplicial principal $S^1$-bundle 
are preserved under the action of $S^1$, the moment map
$\mu^P_A$ is a pullback under $\pi^M$ of a section $\mu_A^M$ of 
the dual bundle  $\mathfrak{G}^*$ of the Lie algebroid $\mathfrak{G}$
of the base groupoid $M \leftleftarrows \Gamma$. Without much
risk of confusion we call both 
$\mu^M_A$ and $\mu^P_A$ the moment maps and often drop the top index.
By definition a connection $A$ is \emph{horizontal} iff $\mu_A = 0$.

For example, the Lie algebroid 
of an action groupoid $P \leftleftarrows G \times P$
is the trivial bundle $P \times \mathfrak{g}$
and the moment map is a map 
$\mu: P \rightarrow \mathfrak{g}^*$
given by $<\! \mu , X \!\! > = i_{\rho(X)} A$, $X \in \mathfrak{g}$. 

\subsection{Invariant connections}\label{InvariantConnections}
We continue with the setup of the previous subsection. Our goal is
to define what it means for a connection to be $P_1$-invariant. 
In the case of a global action 
$P \leftleftarrows G \times P$ a connection $A \in \Omega^1 (P)$ 
is $G$-invariant iff $g^* A = A$ for any $g \in G$. In the groupoid language
an element $g \in G$ should be thought of as a constant section $\Sigma$
of the source map $s$, and the condition $g^* A = A$ can be written as 
\begin{equation}\nonumber
(t^* A -s^* A) |_{_{T\Sigma}} = 0  
\end{equation}
or
\begin{equation}\label{TSigmaZero}
(\delta A) |_{_{T\Sigma}} = 0  
\end{equation}
For a general (non-action) groupoid there is no natural choice of the
section $\Sigma$ through a point $p \in P_1$. One can fix a local regular
foliation $\Sigma$ of $P_1$ by bisections 
(submanifolds transverse to both $s$ and $t$) and say that $A$ is 
$\Sigma$-invariant if \eqref{TSigmaZero} holds. This condition depends 
on the choice of $\Sigma$. 

Note that we have a decomposition 
\begin{equation}\label{TP1Oplus}
T P_1 = T \Sigma \oplus \mathrm{Ker} \  ds
\end{equation}
Hence the two equations \eqref{TSigmaZero} and
\eqref{MuDA} completely determine $\delta A$. In particular,
if the moment map $\mu$ vanishes at some point $p \in P_1$ 
then the condition 
\eqref{TSigmaZero} at the point $p$ does not depend on the choice 
of $\Sigma$. If $\mu$ vanishes identically than the choice of 
$\Sigma$ is irrelevant and the connection is invariant 
(for some $\Sigma$) iff it is basic ($\delta A = 0$).
In other words, as in the global quotient case, the connection is
basic iff it is horizontal and invariant. However for a general 
groupoid the second condition is natural only if the first one is 
satisfied.

\subsection{Prequantization on global quotients}
\label{GlobalQPreq}
We would like to characterize bundles with invariant connections via
their differential characters. Let us restrict ourselves to a global 
quotient case $M \leftleftarrows G \times M$ to avoid choice
of a local foliation $\Sigma$. Given an
equivariant bundle $P$ with an invariant connection $A$ we apply
the functor $\mathrm{DCh}$ (cf. Theorem \ref{InvPreq})
to obtain a character
\begin{equation}\nonumber
\mathrm{DCh} \bigl( (P, A) \bigr) = 
\bigl( (c, h, \omega), [(b, f, \alpha )] \bigr)
\end{equation}
where $\pi^* \alpha = \delta A \in \Omega^1 ( G \times P )$.
Then, according to the previous subsection, $G$-invariance 
of the connection $A$ is equivalent to the following vanishing 
condition on the form $\alpha \in \Omega^1 (G \times M)$:
\begin{equation}\nonumber
i_v \, \alpha = 0
\end{equation}
for any vector field $v \in TM \subset T(G \times M) = TG \oplus TM$.
This means that 
\begin{equation}
\alpha = <\mu,  dg \, g^{-1}> \ ,
\end{equation}
where $dg \, g^{-1}$ is the right-invariant Maurer-Cartan form on $G$
(pulled back to $G \times M$)
and $\mu: M \rightarrow \mathfrak{g}^*$ is the moment map
(pulled back to $G \times M$).

Putting everything together we obtain the following description of
equivariant bundles with invariant connections.
\begin{theorem}\label{GlobalQPreqThm}
Suppose a Lie group $G$ acts on a manifold $M$.
Then the prequantization functor
\begin{equation}\nonumber
\mathrm{Preq}: \ \mathcal{DC}^2_{2-1} (M \leftleftarrows G \times M)
\xrightarrow{\sim} d\mathcal{BS}^1 (M \leftleftarrows G \times M)
\end{equation}
restricts to an equivalence
\begin{equation}\nonumber
\mathrm{Preq}: \ \mathcal{DC}^2_{\mathrm{inv}} (M \leftleftarrows G \times M)
\xrightarrow{\sim} 
d\mathcal{BS}^1_{\mathrm{inv}} (M \leftleftarrows G \times M) \ ,
\end{equation}
where 
$d\mathcal{BS}^1_{\mathrm{inv}} (M \leftleftarrows G \times M)$ is the 
category of $G$-equivariant bundles with $G$-invariant connections
and 
$\mathcal{DC}^2_{\mathrm{inv}} (M \leftleftarrows G \times M)$
is the category of differential characters of the form
$((c, h, \omega), [(b, f, <\mu,  dg \, g^{-1}> )])$.
\end{theorem}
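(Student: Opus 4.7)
The strategy is to invoke Theorem \ref{InvPreq} as the main input and simply identify, inside the equivalence it provides, the subcategory on the bundle side and translate the $G$-invariance condition into the shape of the $\alpha$-component of the differential character.

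Concretely, the plan runs as follows. First I would apply Theorem \ref{InvPreq} to the action groupoid $M \leftleftarrows G \times M$ to obtain the equivalence $\mathrm{Preq}: \mathcal{DC}^2_{2-1}(M \leftleftarrows G \times M) \xrightarrow{\sim} d\mathcal{BS}^1(M \leftleftarrows G \times M)$, recording that if $(P,A)$ corresponds to $((c,h,\omega), [(b,f,\alpha)])$, then $\pi^*\alpha = \delta A = \partial_1^* A - \partial_0^* A$ on $P_1 = G \times P$. Second, I would translate $G$-invariance of $A$ into a condition on $\alpha$: following Subsection \ref{InvariantConnections}, for the action groupoid each $g \in G$ gives a constant bisection $\Sigma_g = \{g\} \times M$, so $G$-invariance $g^*A = A$ is exactly $(\delta A)|_{T\Sigma_g} = 0$ for all $g$. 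Using the splitting $T(G\times M) = TG \oplus TM$, this is the condition $i_v \alpha = 0$ for all $v \in TM$.

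Third, with this vanishing established, I would expand $\alpha$ in the right-invariant Maurer--Cartan coframe on $G$: a 1-form on $G \times M$ that annihilates the $TM$ directions can be written uniquely as $\alpha = \langle \phi, dg\,g^{-1}\rangle$ for some smooth map $\phi: G \times M \to \mathfrak{g}^*$. To identify $\phi$ with the pullback of the moment map, I would pair both sides with a right-invariant section $X \in \mathrm{Ker}\,ds$ generated by $X \in \mathfrak{g}$; by \eqref{MuDA},
\begin{equation}\nonumber
\langle \phi, X \rangle = \alpha(X) = (\pi^*\alpha)(X)/\pi = (\delta A)(X) = \mu^P_A(X) = \pi^* \mu^M_A(X),
\end{equation}
so $\phi$ is the pullback of $\mu = \mu_A^M: M \to \mathfrak{g}^*$ under the projection $G \times M \to M$. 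Conversely, if $\alpha = \langle \mu, dg\,g^{-1}\rangle$, then $\delta A$ annihilates the $TM$-directions, so $A$ is $G$-invariant; this establishes the equivalence of full subcategories on objects.

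Finally, I would check the statement on morphisms: morphisms in $\mathcal{DC}^2_{2-1}(M\leftleftarrows G\times M)$ and in $d\mathcal{BS}^1(M \leftleftarrows G \times M)$ are both inherited from the non-equivariant categories on $M$ alone (cf.\ the proof of Theorem \ref{InvPreq}), and therefore leave $\alpha$ untouched. Hence $\mathrm{Preq}$ restricts to an equivalence between the full subcategories singled out by the respective invariance conditions. The only subtle step is the third one, where the Maurer--Cartan normalization has to be chosen consistently with the sign conventions in \eqref{MuDA} and the definition of $\delta$; once that bookkeeping is done, everything reduces to linear algebra on $T(G \times M)$.
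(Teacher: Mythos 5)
Your proposal is correct and follows essentially the same route as the paper: invoke Theorem \ref{InvPreq}, use the splitting $T(G\times M)=TG\oplus TM$ (constant bisections versus $\mathrm{Ker}\,ds$) to translate $G$-invariance of $A$ into $i_v\alpha=0$ for $v\in TM$, and then identify the remaining $TG$-component of $\alpha$ with $\langle\mu, dg\,g^{-1}\rangle$ via the moment-map formula \eqref{MuDA}. The only blemish is the notational slip ``$(\pi^*\alpha)(X)/\pi$'' in your displayed computation, which should just say that the $S^1$-invariant function $(\delta A)(X)=\mu^P_A(X)$ descends along $\pi$ to $\langle\mu^M_A, X\rangle$ on $M$; this does not affect the argument.
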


To summarize we have two towers of full subcategories 
related by prequantization/character equivalence:

\begin{equation}\nonumber
\xymatrix{
{\mathcal{DC}^2_{2-1} (M \leftleftarrows G \times M)}
\ar[r]^{\mathrm{Preq}}
&
{d\mathcal{BS}^1 (M \leftleftarrows G \times M)}
\\
{\mathcal{DC}^2_{\mathrm{inv}} (M \leftleftarrows G \times M)} 
\ar[r]^{\mathrm{Preq}}
\ar@{^{(}->}[u]
&
{d\mathcal{BS}^1_{\mathrm{inv}} (M \leftleftarrows G \times M)}
\ar@{^{(}->}[u]
\\
{\mathcal{DC}^2_{2} (M \leftleftarrows G \times M)} 
\ar[r]^{\mathrm{Preq}}
\ar@{^{(}->}[u]
&
{\mathcal{DBS}^1 (M \leftleftarrows G \times M)}
\ar@{^{(}->}[u]
}
\end{equation}
The bottom row consists of equivariant bundles with basic
connections and characters with vanishing $\alpha$-component;
the middle row bundles have invariant connections and the characters have
$\alpha = <\! \mu, dg \, g^{-1} \! >$; finally the top row does not involve any
conditions on the connection or $\alpha$-component of the character.

\section{Reduction}
\label{ReductionSection}

The idea of reduction is that after restriction to the zero level of
the moment map an invariant connection becomes basic and thus defines
a connection on a bundle on the quotient stack. 
From our point of view it is more natural to consider categories of bundles
rather than a single bundle. Hence we start with the ``zero level set'' 
and consider all bundles with the moment map vanishing on that 
manifold.

\subsection{General Reduction}
Let $M \leftleftarrows \Gamma$ be a Lie groupoid,
$N$ a submanifold of $M$ such that $s^{-1} N = t^{-1} N$, 
$i : N \rightarrow M$ the inclusion map, and 
$N \leftleftarrows \Gamma_{_N}$ the corresponding full subgroupoid of 
$M \leftleftarrows \Gamma$ 
(i.e. $\Gamma_{_N} = \Gamma \times_{_s M _i} N = \Gamma \times_{_t M _i} N$). 

We denote by $d\mathcal{BS}^1_{N} (M \leftleftarrows \Gamma)$ 
the full subcategory of 
$d\mathcal{BS}^1 (M \leftleftarrows \Gamma)$ consisting of
equivariant principal $S^1$-bundles with connections $(P,A)$ such that 
the restriction $i^* (P,A)$ of $(P,A)$ to $N$ is an equivariant
bundle with a basic connection (i.e. such that $i^* s^* A = i^* t^* A$). 
As explained in the previous section, this condition can be split into two:
\begin{enumerate}
\item
the moment map of $A$ vanishes on $N$: $\mu_A |_{_N} = 0$;
\item
$i^* A$ is $\Gamma_{_N}$-invariant (with respect to any bisection).
\end{enumerate}
Similarly we define a full subcategory 
$\mathcal{DC}^2_{N} (M \leftleftarrows \Gamma)$ 
of $\mathcal{DC}^2_{2-1} (M \leftleftarrows \Gamma)$ consisting of
differential characters $((c,h,\omega ),[(b,f, \alpha )])$ such that
$\alpha |_{_{\Gamma_{_N}}} = 0$.

It follows from Theorem \ref{InvPreq} that 
the prequantization functor
\begin{equation}\nonumber
\mathrm{Preq}: \mathcal{DC}^2_{2-1} (M \leftleftarrows \Gamma)
\xrightarrow{\sim} d\mathcal{BS}^1 (M \leftleftarrows \Gamma)
\end{equation}
restricts to an equivalence
\begin{equation}\nonumber
\mathrm{Preq}: \mathcal{DC}^2_{N} (M \leftleftarrows \Gamma)
\xrightarrow{\sim} 
d\mathcal{BS}_{N}^1 (M \leftleftarrows \Gamma) \ .
\end{equation}
The restriction 
$i^* d\mathcal{BS}^1_{N} (M \leftleftarrows \Gamma)$ of the
category \mbox{$d\mathcal{BS}^1_{N} (M \leftleftarrows \Gamma)$} to 
the submanifold $N$ is equivalent to the category 
$\mathcal{DBS}^1 (N \leftleftarrows \Gamma|_{_N})$
of equivariant bundles with basic connections on $N$, and
similarly $i^* \mathcal{DC}^2_{N} (M \leftleftarrows \Gamma) \simeq
\mathcal{DC}^2_2 (N \leftleftarrows \Gamma_{_N})$. 
Hence we obtain the following result:

\begin{theorem}Prequantization commutes with reduction, i.e. the following
diagram commutes up to natural transformations:
\begin{equation}\nonumber
\xymatrix{
{\mathcal{DC}^2_{2-1} (M \leftleftarrows \Gamma)}
\ar[r]^{\mathrm{Preq}}
&
{d\mathcal{BS}^1 (M \leftleftarrows \Gamma)}
\\
{\mathcal{DC}^2_{N} (M \leftleftarrows \Gamma)} 
\ar@{^{(}->}[u]
\ar[r]^{\mathrm{Preq}} 
\ar[d]^{\textstyle i^*}
&
{d\mathcal{BS}^1_{N} (M \leftleftarrows \Gamma)}
\ar@{^{(}->}[u]
\ar[d]^{\textstyle i^*}
\\
{\mathcal{DC}^2_{2} (N \leftleftarrows \Gamma_{_N})} 
\ar[r]^{\mathrm{Preq}}
\ar[d]^{\textstyle \wr}
&
{\mathcal{DBS}^1 (N \leftleftarrows \Gamma_{_N})}
\ar[d]^{\textstyle \wr}
\\
{\mathcal{DC}^2_{2} ([ N / \Gamma_{_N} ])} 
\ar[r]^{\mathrm{Preq}}
&
{\mathcal{DBS}^1 ([ N / \Gamma_{_N} ])}
}
\end{equation}
\end{theorem}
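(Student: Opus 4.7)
The plan is to verify the commutativity of each of the three component squares separately; each follows from a different result already established in the paper. For the top square, the two horizontal arrows are both the functor $\mathrm{Preq}$ of Theorem \ref{InvPreq} and the vertical arrows are inclusions of full subcategories, so the square commutes on the nose provided $\mathrm{Preq}$ identifies the two subcategories. By Theorem \ref{InvPreq}, if $\mathrm{Preq}((c,h,\omega),[(b,f,\alpha)]) = (P,A)$, then $\pi^*\alpha = \partial_1^*A - \partial_0^*A$. Restricting to $\Gamma_{_N}$, the condition $\alpha|_{\Gamma_{_N}} = 0$ defining $\mathcal{DC}^2_{N}$ is equivalent to $(\partial_1^*A - \partial_0^*A)|_{\Gamma_{_N}} = 0$, i.e.\ to $i^*A$ being basic on $N \leftleftarrows \Gamma_{_N}$, which is the defining condition of $d\mathcal{BS}^1_{N}$.

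For the middle square I would invoke the pull-back compatibility of prequantization, namely property (i) of Theorem \ref{OldPreqThm} applied on $M$, on $\Gamma$, and on the higher nerve pieces in parallel. The inclusion $i : N \hookrightarrow M$ induces an inclusion $\Gamma_{_N} \hookrightarrow \Gamma$, and pulling back along these two maps is exactly what the functor $i^*$ does on both sides of the diagram; the natural transformation $\mathrm{Preq} \circ i^* \Rightarrow i^* \circ \mathrm{Preq}$ furnished by the pull-back compatibility is the commutativity data. The truncation degree in the second column correctly jumps from $1$ to $2$ after restriction, because $i^*$ applied to an object of $\mathcal{DC}^2_{N}$ has $\alpha$-component $\alpha|_{\Gamma_{_N}} = 0$ and therefore lies in $\mathcal{DC}^2_{2}(N \leftleftarrows \Gamma_{_N})$; correspondingly, the restricted connection is basic.

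For the bottom square, both $\mathcal{DC}^2_{2}$ and $\mathcal{DBS}^1$ are stacks over the category of manifolds, as recalled in Subsection \ref{PreqDef} and in the discussion of groupoids, so their values on the quotient stack $[N/\Gamma_{_N}]$ are equivalent to their values on any atlas, in particular on $N \leftleftarrows \Gamma_{_N}$; the vertical arrows are precisely these equivalences. Since $\mathrm{Preq}$ is an equivalence of stacks by Theorem \ref{OldPreqThm}, its action on $[N/\Gamma_{_N}]$ is, via these vertical equivalences, identified with its action on the atlas, and the square commutes up to natural transformation by naturality.

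The main obstacle is the middle square, specifically the bookkeeping of coherent natural transformations. The extended $\mathrm{Preq}$ of Theorem \ref{InvPreq} is built by combining the manifold-level $\mathrm{DCh}$ on $M$ with the manifold-level $\mathrm{DCh}$ on $\Gamma$, and the cochain $[(b,f,\alpha)]$ already absorbs discrepancies between $\mathrm{DCh}$ and pull-backs/products. One must verify that this absorbed discrepancy restricts coherently to the one produced on the $(N \leftleftarrows \Gamma_{_N})$-side, i.e.\ that the natural transformations for $\mathrm{DCh}$ on $M$ and $\Gamma$ restrict to those on $N$ and $\Gamma_{_N}$ in a compatible way. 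This is a routine but delicate diagram chase which ultimately reduces to the coherence clause in Theorem \ref{OldPreqThm}.
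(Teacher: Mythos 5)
Your proposal is correct and follows essentially the same route as the paper: the top square via the identity $\pi^*\alpha = \partial_1^*A - \partial_0^*A$ from Theorem \ref{InvPreq} (so that $\alpha|_{\Gamma_{_N}}=0$ matches the basic-restriction condition), the middle square via compatibility of $\mathrm{Preq}$ with pull-backs along $i$ and the induced inclusion $\Gamma_{_N}\hookrightarrow\Gamma$, and the bottom square via the stack/Morita-invariance property. The paper's own argument is terser but identical in substance, and your identification of the coherence bookkeeping in the middle square as the only delicate point is accurate.
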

Note that prequantization functors in the above
diagram are defined differently for each row
(the top two rows are constructed in the present paper while
the bottom two are from \cite{LermanMalkinPreq}). The meaning
of the theorem is that these definitions are compatible.
 
\subsection{Reduction for action groupoids}
\label{ActionReduction}
On an action groupoid
\mbox{$M \leftleftarrows G \times M$} the reduction can be performed in
a more familiar way: first consider subcategories 
\mbox{$d\mathcal{BS}^1_{\mathrm{inv}} (M \leftleftarrows G \times M)$}, 
$\mathcal{DC}^2_{\mathrm{inv}} (M \leftleftarrows G \times M)$ 
of invariant connection/characters and then further subcategories
\mbox{$d\mathcal{BS}^1_{\mathrm{inv}, N} (M \leftleftarrows G \times M)$}, 
$\mathcal{DC}^2_{\mathrm{inv}, N} (M \leftleftarrows G \times M)$ 
of invariant connection with the moment map vanishing on 
a given $G$-stable submanifold $N$. The categories 
\mbox{$d\mathcal{BS}^1_{\mathrm{inv}, N} (M \leftleftarrows G \times M)$},
\mbox{$\mathcal{DC}^2_{\mathrm{inv}, N} (M \leftleftarrows G \times M)$} 
are in general proper subcategories of 
$d\mathcal{BS}^1_{N} (M \leftleftarrows G \times M)$,
$\mathcal{DC}^2_{N} (M \leftleftarrows G \times M)$ introduced in the
previous subsection
since in the latter ones we impose invariance condition only on 
the submanifold $N$. 

This version of reduction also commutes with prequantization.

\begin{theorem}\label{ActionReductionThm}
The diagram
\begin{equation}\label{ReductionDiagram}
\xymatrix{
{\mathcal{DC}^2_{2-1} (M \leftleftarrows G \times M)}
\ar[r]^{\mathrm{Preq}}
&
{d\mathcal{BS}^1 (M \leftleftarrows G \times M)}
\\
{\mathcal{DC}^2_{\mathrm{inv}} (M \leftleftarrows G \times M)} 
\ar@{^{(}->}[u]
\ar[r]^{\mathrm{Preq}} 
&
{d\mathcal{BS}^1_{\mathrm{inv}} (M \leftleftarrows G \times M)}
\ar@{^{(}->}[u]
\\
{\mathcal{DC}^2_{\mathrm{inv}, N} (M \leftleftarrows G \times M)} 
\ar@{^{(}->}[u]
\ar[r]^{\mathrm{Preq}} 
\ar[d]^{\textstyle i^*}
&
{d\mathcal{BS}^1_{\mathrm{inv}, N} (M \leftleftarrows G \times M)}
\ar@{^{(}->}[u]
\ar[d]^{\textstyle i^*}
\\
{\mathcal{DC}^2_{2} (N \leftleftarrows G \times N)} 
\ar[r]^{\mathrm{Preq}}
\ar[d]^{\textstyle \wr}
&
{\mathcal{DBS}^1 (N \leftleftarrows G \times N)}
\ar[d]^{\textstyle \wr}
\\
{\mathcal{DC}^2_{2} ([ N / G ])} 
\ar[r]^{\mathrm{Preq}}
&
{\mathcal{DBS}^1 ([ N / G ])}
}
\end{equation}
commutes up to natural transformations. 
\end{theorem}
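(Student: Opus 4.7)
The plan is to verify commutativity square by square, from top to bottom, and rely on previously established results whenever possible. The overall strategy is that the top two squares come for free from the fact that Preq has already been shown to restrict to equivalences on the relevant subcategories (Theorem \ref{GlobalQPreqThm} together with Theorem \ref{InvPreq}), and that the inclusion functors on both sides are just inclusions of full subcategories. In particular, for a bundle $(P,A)$ with $G$-invariant connection the character $\mathrm{DCh}((P,A))$ produced by Theorem \ref{InvPreq} has $\alpha = \langle\mu, dg\, g^{-1}\rangle$ by the analysis of Subsection \ref{GlobalQPreq}, so Preq and its quasi-inverse respect the inclusions $\mathcal{DC}^2_{\mathrm{inv},N} \hookrightarrow \mathcal{DC}^2_{\mathrm{inv}} \hookrightarrow \mathcal{DC}^2_{2-1}$ on one side and the corresponding inclusions of $d\mathcal{BS}^1$-categories on the other.

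The main substance is the third square, which passes from the invariant-with-moment-vanishing-on-$N$ categories in row 3 down to the basic-connection categories on $N \leftleftarrows G \times N$ in row 4 via $i^*$. The first thing I would verify is that $i^*$ actually lands in the subcategories claimed. On the bundle side: if $A$ is $G$-invariant and $\mu_A|_N = 0$, then the decomposition \eqref{TP1Oplus} combined with \eqref{TSigmaZero} and \eqref{MuDA} forces $\delta A$ to vanish along $G \times N$, so $i^* A$ is basic. On the character side: an object of $\mathcal{DC}^2_{\mathrm{inv},N}$ has $\alpha = \langle\mu, dg\, g^{-1}\rangle$ with $\mu|_N=0$, so $\alpha|_{G \times N} = 0$ and the restricted cocycle $i^*((c,h,\omega),[(b,f,\alpha)])$ is an object of $\mathcal{DC}^2_2(N \leftleftarrows G \times N)$. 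Commutativity of the third square then reduces to the statement that the prequantization/character equivalence commutes with pullbacks (up to coherent natural transformations), which is precisely property (i) in Theorem \ref{OldPreqThm}, applied termwise on the double complex defining $\mathcal{DC}^2_{2-1}$.

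The fourth (bottom) square is not new: it is the descent equivalence $\mathcal{DC}^2_2(N \leftleftarrows G \times N) \simeq \mathcal{DC}^2_2([N/G])$ and $\mathcal{DBS}^1(N \leftleftarrows G \times N) \simeq \mathcal{DBS}^1([N/G])$ together with the intertwining of these equivalences by Preq established in \cite{LermanMalkinPreq}. I would just cite it. The key observation that makes the descent available here is again that the restricted connection is basic, so the bundle with connection makes sense as an object on the quotient stack $[N/G]$.

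The main obstacle is the coherence of the natural transformations furnished by each square: one has three separate weak commutativity data (the pullback-comparison 2-cell from Theorem \ref{OldPreqThm}, the Morita descent 2-cell of \cite{LermanMalkinPreq}, and the trivial 2-cells associated with the subcategory inclusions), and one must verify that pasting them yields a globally coherent natural transformation making the whole diagram \eqref{ReductionDiagram} commute. I would handle this by checking coherence on objects of the form $(M \times S^1, \beta + d\theta)$ (where the explicit formula for $\mathrm{DCh}_{\mathrm{triv}}$ in Subsection \ref{PreqDef} makes everything strict), then appeal to the stack property to extend to arbitrary objects, reducing the question to the elementary fact that pullback and restriction to $N$ commute strictly on trivial bundles with connections and on their associated cochains.
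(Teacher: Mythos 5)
Your proposal is correct and follows essentially the same route as the paper, whose entire proof is the one-line remark that the theorem is ``a simple application of Theorem \ref{InvPreq}, more precisely of the explicit relation between $\delta A$ and the $\alpha$-part of a differential character'' --- exactly the relation $\pi^*\alpha = \partial_1^*A - \partial_0^*A$ that you use to show $i^*$ lands in the basic/vanishing-$\alpha$ subcategories and that the squares commute via compatibility of $\mathrm{Preq}$ with pullbacks and descent. Your square-by-square verification and the coherence check on trivial bundles are a faithful (and more detailed) expansion of what the paper leaves implicit.
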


The proof is again a 
simple application of Theorem \ref{InvPreq} (more precisely, 
of the explicit relation between $\delta A$ and the 
$\alpha$-part of a differential character).

\section{Differential Characters as Characters}
\label{CharactersSection} 

In this section we provide an
explicit description of the actual differential characters on
groupoids, i.e. of the isomorphism classes of objects of 
$\mathcal{DC}^2_{2-1} (M \leftleftarrows \Gamma)$
or, in other words, of the second total cohomology group of the complex
$DC^{\bullet}_{2-1} (M \leftleftarrows \Gamma)$.

\subsection{Differential characters on manifolds and groupoids}
Let us first recall the description of Cheeger-Simons differential characters
in the case of a manifold $M$
(cf. \citelist{\cite{CheegerSimons1985} \cite{HopkinsSinger2005}}).
We are interested in the cohomology group
\begin{multline}\nonumber
H^2(DC^{\bullet}_{2} (M) 
= \\ =  
\frac{
\{ 
(c, h, \omega) 
\in C^2 (M, \mathbb{Z}) \times C^1 (M, \mathbb{R}) \times \Omega^2(M) 
\ | \ dc = 0, \ \omega - c - dh = 0, \ d \omega = 0
\}
}{
(c, h, \omega) \sim (c + da , h - a - dt, \omega) 
\ , \  
(a, t) \in C^1 (M, \mathbb{Z}) \times C^0 (M, \mathbb{R})
}
\end{multline}
It is easy to see that equivalence (cohomology) class
of $(c,h,\omega)$ is completely determined by $\omega$ 
and the values of \mbox{$h$ mod $\mathbb{Z}$} on 
smooth 1-cycles. To make a precise statement let us introduce
the group of differential characters on $M$. A differential character
is a pair $(\omega, \chi )$, where $\omega \in \Omega^2 (M)$, $d\omega = 0$,
and $\chi$ is a character $Z_1 (M) \rightarrow \mathbb{R}/\mathbb{Z}$ 
of the group of smooth 1-cycles $Z_1 (M)$. 
This pair should satisfy the following condition:
\begin{equation}\label{CInt}
\chi ( \partial S ) = \int_S \omega \quad \mathrm{mod} \ \mathbb{Z}
\end{equation}
for any smooth 2-chain $S \in C_2 (M, \mathbb{Z})$. The map
\begin{equation}\nonumber
[(c, h, \omega )] \mapsto 
(\omega , \ h|_{_{Z_1 (M)}} \ \mathrm{mod} \ \mathbb{Z})
\end{equation}
provides an isomorphism between $H^2(DC^{\bullet}_{2} (M))$ and the
group of differential characters. Condition \eqref{CInt} ensures
that $c=\omega - dh$ is an integral cochain. 

Let us turn to the groupoid case. We would like to have an explicit
description of the second total cohomology group of the complex
$DC^{\bullet}_{2-1} (M \leftleftarrows \Gamma)$. 
An element of $H^2 (DC^{\bullet}_{2-1} (M \leftleftarrows \Gamma))$
is a closed cochain 
$((c, h, \omega), [(b, f, \alpha)])$ up to addition of
the total differential of $[(a,t,0)]$, see \ref{CharDef} 
for notation. The cochain being closed means
\begin{align}\label{CharD}
d(c, h, \omega) &= 0 \\ \label{CharDeltaD}
\delta (c, h, \omega ) &= d [(b, f, \alpha)] \\ \label{CharDelta}
\delta [(b, f, \alpha)] &= 0 
\end{align}

Similar to the manifold case $c$ and $b$ are determined by
$h$, $f$, $\omega$ and $\alpha$:
\begin{align}\label{DefB}
b &= \alpha - d f - \delta h \\ \label{DefC}
c &= \omega -dh
\end{align}

Turning to differential form components, we put 
\begin{equation}
\Theta = (\omega, \alpha, 0) \in 
\Omega^2 (M)  \oplus \Omega^1 (\Gamma )  \oplus \Omega^0 (\Gamma_2 ) 
= \Omega^2 (M \leftleftarrows \Gamma ) \ .
\end{equation}
Then $\Theta $ is a closed 2-form on $M \leftleftarrows \Gamma $ 
with vanishing $\Omega^0 (\Gamma_2)$-component.

Finally the pair $(h,f)$ is determined up to coboundary 
(total differential of $[(a,t,0)]$) by
its values mod $\mathbb{Z}$ on smooth 1-cycles 
$Z_1 (M \leftleftarrows \Gamma )$. Recall that a cycle
(generator of $Z_1$) on a groupoid looks like this
(see for example \cite{Behrend2004}):
\begin{equation}\nonumber
\xymatrix{
& {} \ar@{-->}[r]^{g_1} & {} \\
{} \ar@{->}[ur]^{\gamma_1} & & & {} \ar@{<-}[ul]_{\gamma_2} 
\ar@{-->}[d]^{g_2}\\
{} \ar@{-->}[u]^{g_n} & & & {} 
  \\
& {}  & \ar@{.}@/^1pc/[llu]{} \ar@{<-}[ur]+D*{}_{\gamma_3}
}
\end{equation}
Here $\gamma_i : [0,1] \rightarrow M$ are smooth paths in $M$ and
$g_i \in \Gamma$ are arrows connecting end-points of these paths.
This is a particular example of a smooth singular 1-chain on the 
groupoid and so it makes sense to evaluate the cochain $(h,f)$ on this chain:
$\sum h( \gamma_i ) + \sum f( g_i ) \in \mathbb{R}$. 
Taking values mod $\mathbb{Z}$ we get a character
\begin{equation}\nonumber
\Psi = (h,f)|_{_{Z_1 (M \leftleftarrows \Gamma )}} \ 
\mathrm{mod} \ \mathbb{Z} \quad : \quad 
Z_1 (M \leftleftarrows \Gamma ) 
\rightarrow \mathbb{R}/\mathbb{Z} \ .
\end{equation}

The pair $(\Theta, \Psi )$ uniquely determine the cohomology class
of $((c, h, \omega), [(b, f, \alpha)])$. Conversely, a pair
$\bigl( \Theta \in \Omega^2 (M \leftleftarrows \Gamma ), \ 
\Psi :  Z_1 ( M \leftleftarrows \Gamma ) 
\rightarrow \mathbb{R} / \mathbb{Z} \bigr)$
corresponds to a cohomology class in 
$H^2 (DC^{\bullet}_{2-1} (M \leftleftarrows \Gamma))$
iff it satisfies the following conditions:
\begin{enumerate}
\item 
\label{FirstCond}
$\Theta$ is a closed 2-form on
$(M \leftleftarrows \Gamma )$ with vanishing
$\Omega^0 (\Gamma_2)$-component. This ensures that the
third components of the relations 
\eqref{CharD} - \eqref{CharDelta} hold true.
\item 
\label{CharCond}
$\Psi (g_1 g_2 g_3) = 0$ for $g_1, g_2, g_3 \in \Gamma$
such that $g_1 g_2 g_3 = id_p$, $p \in M$. 
Note that such a triple $g_1$, $ g_2$,  $g_3$
is a cycle in $Z_1 (M \leftleftarrows \Gamma )$: 
\begin{equation}\nonumber
\xymatrix{
{p} \ar@{-->}[rr]^{g_1} & & {} \ar@{-->}[ld]^{g_2} \\
& {} \ar@{-->}[ul]^{g_3} & {}
}
\end{equation}
This condition ensures that $\delta f =0$. 
\item
\label{GroupLoopCond}
For any smooth path $\eta: [0,1] \rightarrow \Gamma$ one has
\begin{equation}\nonumber
\Psi (\partial \eta) =  
\int_{\eta} \alpha \quad \mathrm{mod} \ \mathbb{Z} \ ,
\end{equation}
where $\partial \eta$ is the total boundary of $\eta$, i.e. 
the following cycle in \mbox{$Z_1 (M \leftleftarrows \Gamma )$}:
\begin{equation}\nonumber
\xymatrix{
{} \ar@{-->}[r]^{\eta (0)} & {} \ar@{->}[d]^{t(\eta )} \\
{} \ar@{->}[u]^{-s(\eta )} \ar@{<--}[r]_{\eta (1)^{-1}} & {}
}
\end{equation}
This condition ensures that $b (\eta )$ defined by \eqref{DefB} belongs
to $\mathbb{Z}$.
\item\label{LastCond}
For any smooth 2-chain $S$ on $M$ one has
\begin{equation}\nonumber
\Psi ( \partial S ) = 
\int_S \omega \quad \mathrm{mod} \ \mathbb{Z} \ .
\end{equation}
Note that $\partial S$ is a 1-cycle on $M$ but we think of it
as an element of $Z_1 (M \leftleftarrows \Gamma )$ 
\begin{equation}\nonumber
\qquad \quad
\xymatrix{
{} \ar `r[r] `[d]^{\partial S}_{\textstyle S \quad {}}  `[d] []  & {} 
%\ar@{->}[d]^{\partial S} 
\\
{} 
%\ar@{->}[u]_{\textstyle S} 
%\ar@{<-}[r] 
& 
{}
}
\end{equation}
This condition ensures that $c (S)$ defined by \eqref{DefC} belongs
to $\mathbb{Z}$.
\end{enumerate}

\noindent
Let us summarize the above discussion as the following version of Theorem
\ref{InvPreq}.

\begin{theorem} Isomorphism classes of equivariant $S^1$-bundles 
with (not necessarily basic) connections on a 
groupoid $M \leftleftarrows \Gamma$ are in bijection
with differential characters, i.e. pairs $(\Theta , \Psi)$, where 
$\Theta \in \Omega^2 ( M \leftleftarrows \Gamma )$ and
$\Psi$ is a character 
\mbox{$Z_1 (M \leftleftarrows \Gamma ) \rightarrow \mathbb{R}/\mathbb{Z}$},
satisfying conditions \eqref{FirstCond}--\eqref{LastCond} above. 

Characters $(\Theta , \Psi)$ with vanishing 
$\Omega^1 (\Gamma )$-component of $\Theta$
correspond to equivariant bundles with basic connections,
i.e. bundles with connections on the quotient stack.
\end{theorem}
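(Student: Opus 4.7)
The plan is to reduce the statement to Theorem \ref{InvPreq}, which already identifies isomorphism classes of equivariant bundles with connections with $H^2(DC^{\bullet}_{2-1}(M \leftleftarrows \Gamma))$, and then to reexpress this cohomology group as the set of pairs $(\Theta, \Psi)$ subject to conditions \eqref{FirstCond}--\eqref{LastCond}. In other words, the theorem is really a combinatorial statement about the double complex $DC^{\bullet}_{2-1}(M \leftleftarrows \Gamma)$, and the geometric input from bundles has already been absorbed into Theorem \ref{InvPreq}.

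I would first define the assignment $[((c,h,\omega),[(b,f,\alpha)])] \mapsto (\Theta, \Psi)$ with $\Theta = (\omega,\alpha,0)$ and $\Psi = (h,f)|_{Z_1(M \leftleftarrows \Gamma)} \bmod \mathbb{Z}$, and verify that it is well-defined. The only coboundaries in total degree $2$ come from $d_{\mathrm{tot}}[(a,t,0)]$ with $(a,t,0) \in DC^1_{2-1}$, so that under a coboundary $\omega$ and $\alpha$ are unchanged while $(h,f)$ shifts by something whose evaluation on any smooth $1$-cycle of the groupoid is integer-valued (because of the integrality of $a$ and the equivalence $(b,f,\alpha)\sim(b+dn, f-n,\alpha)$).

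Next I would prove injectivity and surjectivity simultaneously, using the explicit formulas \eqref{DefB}--\eqref{DefC}: given any lifts $h \in C^1(M,\mathbb{R})$ and $f \in C^0(\Gamma,\mathbb{R})$ of $\Psi$, define $c := \omega - dh$ and $b := \alpha - df - \delta h$. The content to check is that $(c,b)$ are integer cochains and that $((c,h,\omega),[(b,f,\alpha)])$ satisfies the three cocycle equations $d(c,h,\omega)=0$, $\delta(c,h,\omega) = d[(b,f,\alpha)]$, and $\delta[(b,f,\alpha)]=0$. The form part of each of these identities reduces to closedness of $\Theta$ on the groupoid, which is condition \eqref{FirstCond}. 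The integrality of $c$ on smooth $2$-chains on $M$ is exactly condition \eqref{LastCond}. The integrality of $b$ on paths $\eta \subset \Gamma$ is condition \eqref{GroupLoopCond}, and the cocycle identity $\delta b = 0$ on composable pairs in $\Gamma_2$ becomes, after reduction mod $\mathbb{Z}$, condition \eqref{CharCond} (the triangle identity for $\Psi$). Two different choices of lifts $(h,f)$ of the same $\Psi$ differ by an integral $1$-cochain $(a,t,0)$, giving a coboundary in $DC^{\bullet}_{2-1}$; this yields injectivity. Conversely, any $(\Theta,\Psi)$ satisfying \eqref{FirstCond}--\eqref{LastCond} admits such a lift (one exists since $\Psi$ is $\mathbb{R}/\mathbb{Z}$-valued on chains, and $h,f$ can be chosen as arbitrary real lifts), yielding surjectivity.

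For the second assertion, note that by Theorem \ref{InvPreq} the $\Omega^1(\Gamma)$-component of $\Theta$ is precisely the form $\alpha$ with $\pi^{\ast}\alpha = \delta A$, so $\alpha = 0$ is equivalent to $A$ being basic. The main obstacle is purely bookkeeping: one must carefully match up the four integrality/cocycle conditions \eqref{FirstCond}--\eqref{LastCond} with the four relations \eqref{CharD}--\eqref{CharDelta} after substituting \eqref{DefB}--\eqref{DefC}, and ensure that nothing is lost when passing from the cochain-level data $(c,h,\omega,b,f,\alpha)$ to the character-level data $(\Theta,\Psi)$. This is essentially a degree-by-degree check in the total complex, but it must be carried out on all three pieces of the double complex ($M$, $\Gamma$, and $\Gamma_2$) to confirm that \eqref{CharCond} accounts for the $\Gamma_2$-level cocycle condition without introducing spurious data.
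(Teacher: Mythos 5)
Your proposal is correct and follows essentially the same route as the paper: the theorem is obtained by combining Theorem \ref{InvPreq} with an explicit unwinding of the cohomology group $H^2(DC^{\bullet}_{2-1}(M \leftleftarrows \Gamma))$ via the substitutions \eqref{DefB}--\eqref{DefC}, which is exactly the content of the discussion the paper summarizes as this theorem. The one minor imprecision is that condition \eqref{CharCond} most directly encodes $\delta f \equiv 0 \pmod{\mathbb{Z}}$ (the $\Gamma_2$-component of $\delta[(b,f,\alpha)]=0$), with the integrality of $\delta b$ then following; this does not affect the argument.
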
 

\noindent
\emph{Remark.} 
Note that $(\Theta |_{_M} , \Psi |_{_{Z_1(M)}})$ defines a differential 
character on $M$ corresponding to an $S^1$-bundle with connection on $M$. 
The above Theorem says that equivariant structures 
on this bundle are in 1-1 correspondence
with equivariant extensions of its character.

\subsection{Special cases}
Because of conditions \eqref{CharCond} -- \eqref{LastCond} 
the character $\Psi$ is determined by its value on the following
classes of 1-cycles for each point $p \in M$: 
(1) generators of the fundamental group of
$M$ with the base point $p$, (2) representatives of the connected 
components of the inertia group $I_p = s^{-1} p \cap t^{-1} p$,
and (3) cycles of the form
\begin{equation}\nonumber
\xymatrix{{p} \ar@{<--}@/_/[rr] & &{} \ar@{<-}@/_/[ll]} \qquad  ,
\end{equation}
where as before the solid line is a path in $M$ and the broken line is
an arrow in $\Gamma$. Moreover it is enough to consider one base 
point $p$ in each connected (via sequences of paths in $M$ and
arrows in $\Gamma$) component of $M$. This description of $\Psi$, though 
economical, is not very natural because homotopy of paths/points involves
integration of $\omega$ and/or $\alpha$. However in some cases it 
can clarify (and simplify) things substantially. Here are some examples:

\begin{enumerate}
\item
If $\Theta = 0$ then $\Psi$ is just a homomorphism of the 
product of the (suitably defined) fundamental groups 
of connected components of the stack $[M / \Gamma]$ into
$\mathbb{R}/\mathbb{Z}$, i.e. a local system on $[M / \Gamma]$.
\item
If the groupoid is source-connected (e.g. it is the action 
groupoid of a connected group action)  
then $\Psi$ is determined by its values on $Z_1 (M)$ 
(or just on generators of fundamental groups of connected components 
of $M$).
\item
For any point $p \in M$ the character $\Psi$ restricts to a homomorphism 
$I_p \rightarrow \mathbb{R}/\mathbb{Z}$, where $I_p$ is the inertia 
group at $p$. Moreover this homomorphism is determined by its values on 
representatives of the connected components of $I_p$.
Note however that, unless the quotient of $I_p$ by its identity component
$I_p^e$ has a section or the form $\alpha$ vanishes, 
$\Psi$ does not lift to a homomorphism 
$I_p/I_p^e \rightarrow \mathbb{R}/\mathbb{Z}$.
\item
In the case of a finite group $G$ acting on a point, a differential
character is just a homomorphism $G \rightarrow \mathbb{R}/\mathbb{Z}$.
\end{enumerate}

\subsection{Invariant differential characters on an action groupoid}
\label{ActionChar}
In the case of an action groupoid  $M \leftleftarrows G \times M$
we have a notion of invariant connections (cf. subsection \ref{GlobalQPreq})
which fit between arbitrary and basic ones. 
We would like to describe the corresponding differential characters. 
Invariance of a connection corresponds to vanishing of 
the form $\alpha \in \Omega^1 (G \times M)$ on
constant sections of the source map (projection) 
$s : G \times M \rightarrow M$.
Because of relation \eqref{GroupLoopCond} this vanishing condition 
is equivalent to vanishing of 
the character $\Psi$ on cycles of the type
\begin{equation}\nonumber
\xymatrix{
{} \ar@{-->}[r]^{g} & {} \ar@{->}[d]^{g \circ \gamma} \\
{} \ar@{->}[u]^{\gamma} \ar@{-->}[r]_{g} & {}
}
\end{equation}
Here $\gamma : [0,1] \rightarrow M$ is a path in $M$, and we
denote by the same letter $g$ an element of $G$, the corresponding
constant section of $s$, and the corresponding diffeomorphism of $M$.
Taking into account \eqref{CharCond} we see that invariance of
the connection means that the character $\Psi$ is invariant 
with respect to the natural action of $g \in G$ on 
$Z_1 (M \leftleftarrows M \times G)$.

Note that assuming
\eqref{FirstCond}, \eqref{CharCond}, and \eqref{LastCond}, it is
enough to require \eqref{GroupLoopCond} on homotopy representatives
of paths in $G \times M$ and any path in $G \times M$
is homotopic to a composition of a constant section of $s$ and
a 1-parametric subgroup in $G$ acting on a point in $M$.
The above invariance condition takes care of
constant sections of $s$, so let us consider a 1-parametric
subgroup $e^{tX}$, $X\in \mathfrak{g}$. Given a point $p \in M$ the
relation \eqref{GroupLoopCond} applied to the path 
$(e^{tX}, p) \subset G \times M$ reads
\begin{equation}\nonumber
\Psi( \xymatrix{{p} \ar@{<--}@/_/_{e^{-X}}[rr] & &{} 
\ar@{<-}@/_/_{e^{tX}}[ll]}  {e^{X}p} ) = <\mu (p) , X>
\quad \mathrm{mod} \ \mathbb{Z} \ .
\end{equation}
Here  the solid arrow represents the path $e^{tX} p$, $t \in [0,1]$,
in the manifold $M$, the broken arrow the element $(e^{tX}, p)$ of
$G \times M$, and $\mu: M \rightarrow \mathfrak{g}^*$ is the moment map. 

This discussion together with subsection \ref{GlobalQPreq}, 
leads us to the following description of the invariant
characters for an action groupoid.

\begin{theorem}\label{ActionCharThm}
Isomorphism classes of equivariant $S^1$-budles 
with invariant connections on an action groupoid 
$M \leftleftarrows G \times M$ are in bijection
with invariant differential characters, i.e. triples
$(\omega, \mu , \Psi)$, where $\omega \in \Omega^2 ( M )$ is a 2-form, 
$\mu : M \rightarrow \mathfrak{g}^*$ is a (moment) map, and 
$\Psi$ is a character 
$Z_1 (M \leftleftarrows G \times M ) \rightarrow \mathbb{R}/\mathbb{Z}$.
This triple should satisfy the following conditions:
\begin{enumerate}
\item
$\omega$ and $\Psi$ are $G$-invariant, $\mu$ is $G$-equivariant. 
\item\label{InvariantMomentCond}
$i_{\varepsilon_X} \omega = - <d \mu, X>$, where $X \in \mathfrak{g}$ and 
$\varepsilon_X$ is the action vector field of $X$.
\item\label{InvariantOmegaCond}
$(\omega, \Psi |_{_{Z_1(M)}})$ is an (invariant) differential character 
on $M$, i.e. 
\begin{equation}\nonumber
\Psi (\partial S) = \int_S \omega \quad \mathrm{mod} \ \mathbb{Z}
\end{equation}
for any $S \in C_2 (M)$.
\item\label{OneParPsi}
One has
\begin{equation}\nonumber
\Psi( \xymatrix{{p} \ar@{<--}@/_/_{e^{-X}}[rr] & &{} 
\ar@{<-}@/_/_{e^{tX}}[ll]}  {e^{X}p} ) = <\mu (p) , X \! > 
\quad \mathrm{mod} \ \mathbb{Z}
\end{equation}
for any $p \in M$, $X \in \mathfrak{g}$.
\item
$\Psi$ restricts to a character 
$\Psi_p: I_p \rightarrow \mathbb{R}/\mathbb{Z}$ 
of the stabilizer (inertia) group $I_p$ for each point $p \in M$. 
\end{enumerate}
The connection is basic iff $\mu \equiv 0$.
\end{theorem}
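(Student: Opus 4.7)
The plan is to derive this theorem by specializing the general groupoid characterization (the preceding theorem in this section) to action groupoids and imposing the invariance condition from Theorem \ref{GlobalQPreqThm}. First I would apply the previous theorem to identify an isomorphism class of equivariant bundles with connections on $M \leftleftarrows G \times M$ with a pair $(\Theta, \Psi)$ where $\Theta = (\omega, \alpha, 0) \in \Omega^2(M) \oplus \Omega^1(G \times M) \oplus \Omega^0((G \times M)_2)$ satisfies conditions \eqref{FirstCond}--\eqref{LastCond}. Then, by Theorem \ref{GlobalQPreqThm}, the connection is $G$-invariant precisely when $\alpha = \langle \mu, dg \, g^{-1}\rangle$ for a unique smooth map $\mu : M \to \mathfrak{g}^*$, and this is exactly the data we use to replace $\alpha$ by $\mu$ in the final description.

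Next I would translate each of the conditions \eqref{FirstCond}--\eqref{LastCond} into the conditions (1)--(5) in the theorem. Closedness of $\Theta$ decomposes as $d\omega = 0$, $\delta\omega = d\alpha$, and $\delta\alpha = 0$. Substituting $\alpha = \langle \mu, dg\, g^{-1}\rangle$ and using the Maurer--Cartan equation, the identity $\delta\omega = d\alpha$ yields the moment map condition $i_{\varepsilon_X}\omega = -\langle d\mu, X\rangle$ (condition \eqref{InvariantMomentCond}), while $\delta\alpha = 0$ forces $\mu$ to be $G$-equivariant. The $G$-invariance of $\omega$ and $\Psi$ is the content of the invariance discussion preceding the theorem, giving condition (1). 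Condition \eqref{LastCond} restricted to $2$-chains in $M$ yields condition \eqref{InvariantOmegaCond}, and condition \eqref{CharCond} restricted to triples $g_1 g_2 g_3 = \mathrm{id}_p$ of arrows in the stabilizer $I_p$ gives condition (5).

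The key computation is condition \eqref{OneParPsi}. Here I would apply condition \eqref{GroupLoopCond} of the previous theorem to the path $\eta(t) = (e^{tX}, p)$ in $G \times M$, whose total boundary $\partial\eta$ is exactly the cycle depicted in condition \eqref{OneParPsi}. Computing $\int_\eta \alpha = \int_0^1 \langle \mu(p), X \rangle \, dt = \langle \mu(p), X\rangle$ gives the claimed formula. Conversely, I would argue that conditions (1)--(5) are sufficient to recover condition \eqref{GroupLoopCond} for an arbitrary smooth path in $G \times M$: any such path is homotopic (rel endpoints, and modulo paths in $M$ which are covered by condition \eqref{InvariantOmegaCond}) to a concatenation of a constant section of $s$ (on which $\alpha$ vanishes by invariance, contributing $0$ to $\Psi$ by the $G$-invariance of $\Psi$ together with condition \eqref{CharCond}) with a path of the form $(e^{tX}, p)$.

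The main obstacle will be verifying this last reduction carefully: one must check that the integration of $\alpha = \langle\mu, dg\, g^{-1}\rangle$ over an arbitrary path in $G \times M$ decomposes modulo $\mathbb{Z}$ into the contributions from constant sections and one-parameter subgroups in a way consistent with the $G$-equivariance of $\mu$, so that homotopies between different decompositions produce only integral ambiguities (absorbed by $\omega$ via condition \eqref{InvariantOmegaCond}). Finally, the last assertion that the connection is basic iff $\mu \equiv 0$ is immediate: basic means $\alpha = 0$, which via $\alpha = \langle \mu, dg\, g^{-1}\rangle$ forces $\mu$ to vanish pointwise, and conversely.
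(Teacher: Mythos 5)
Your proposal follows essentially the same route as the paper: specialize the general groupoid characterization $(\Theta,\Psi)$ to the action groupoid, substitute $\alpha = \langle \mu, dg\,g^{-1}\rangle$ from Theorem \ref{GlobalQPreqThm}, unpack closedness of $\Theta$ into conditions (1)--(2), and reduce condition \eqref{GroupLoopCond} to the one-parameter-subgroup computation $\int_\eta \alpha = \langle\mu(p),X\rangle$ via the homotopy decomposition of paths in $G\times M$ into constant sections and one-parameter subgroups. The step you flag as the main obstacle is exactly the one the paper also asserts without detailed verification, so your account matches the published argument in both substance and level of rigor.
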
 

\noindent
Here we rewrote the condition that the form
\mbox{$\bigl( \omega, < \! \mu, dg \, g^{-1} \! \! >, 0 \bigr) \in 
\Omega^2 ( M \leftleftarrows G \times M )$}
is closed in a more familiar form (see for example 
\cite{GuilleminSternberg1999}):
(1) $\omega$ is closed and $G$-invariant,
(2) $\mu: M \rightarrow \mathfrak{g}^*$ is
$G$-equivariant, and  (3) $i_{\varepsilon_X} \omega = - <d \mu, X \! >$.

As for a general groupoid, one only needs to specify
the character $\Psi$ on some elements of 
$Z_1 (M \leftleftarrows G \times M )$.
However a natural choice of these generators depends on the particular 
situation.

\section{Examples}
\label{ExamplesSection}
In this section we sketch several examples of differential characters and
reduction. We leave the details to an interested reader.

\subsection{Example: classifying stack}
Consider a Lie group $G$ acting on a point $\ast$. A differential
character on the groupoid $\ast \leftleftarrows G \times \ast$ is a 
homomorphism $\Psi: G \rightarrow \mathbb{R}/\mathbb{Z}$
together with an element $\mu \in \mathfrak{g}^*$, such that 
$d \Psi = \mu$. Such a pair $(\Psi , \mu)$ is determined by the
character $\Psi$ which has to be smooth. So the differential character
on $\ast \leftleftarrows G \times \ast$ is the same 
as a smooth character of $G$.

\subsection{Example: coadjoint orbits of compact Lie groups}
Let $M$ be a coadjoint orbit of a compact connected Lie group $G$:
$M = G \cdot \lambda$ for $\lambda \in \mathfrak{g}^*$. 
If we assume the moment map
$\mu :M \hookrightarrow \mathfrak{g}^*$ the inclusion then
the rest of a differential character is uniquely determined 
by Theorem \ref{ActionCharThm} as follows. 

Let $\tau : G \rightarrow M$ be given by $\tau (g) =g \cdot \lambda$.
The condition \eqref{InvariantMomentCond} implies
\begin{equation}\nonumber
\tau^* \omega = d <\lambda , g^{-1} dg> \ ,
\end{equation}
where $g^{-1} dg$ is the left-invariant
Maurer-Cartan form. This equality completely determines $\omega$ since
$\tau$ is a surjective submersion.

Now we turn to the character $\Psi$.
Note that $\Psi$ is $G$-invariant and $G$ is compact. 
Therefore $\Psi$ is determined by its values on
$G$-invariant cycles in $Z_1 (G \times M)$ containing $\lambda$. 
Moreover, $G$ being connected, we can assume these cycles to be 
closed paths 
\begin{equation}\nonumber
e^{tX} \cdot \lambda \subset M \ , \quad 0 \leq t \leq 1 \ , 
\quad X \in \mathfrak{g} \ ,
\end{equation} 
produced by actions of 1-parametric subgroups of $G$ on $\lambda$. 
For such cycles equation \eqref{OneParPsi} implies
\begin{equation}\nonumber
\Psi ( \{ e^{tX} \cdot \lambda \} ) = < \! \lambda , X \! > + \, \Psi (e^X) 
\quad \mathrm{mod} \ \mathbb{Z} \ .
\end{equation}
Note that $e^X$ belongs to the stabilizer $I_{\lambda}$
of $\lambda$ in $G$ (in other words to the inertia group at $\lambda$);
we denote $\mathfrak{i}_{\lambda}$ the Lie algebra of $I_{\lambda}$. 
It remains to determine the values of the character 
$\Psi|_{_{I_{\lambda}}}: \ I_{\lambda} \rightarrow \mathbb{R}/\mathbb{Z}$. 
Because of \eqref{OneParPsi}
we have $\mathrm{d} \Psi |_{_{I_{\lambda}}} = \lambda|_{\mathfrak{i}_{\lambda}}$
and $I_{\lambda}$ being connected
this determines $\Psi |_{I_{\lambda}}$, completing construction
of the differential character $( \omega , \mu , \Psi)$.

Of course there are still conditions to be checked for 
$( \omega , \mu , \Psi)$ to be a differential character. 
For example, $\lambda|_{\mathfrak{i}_{\lambda}}$ 
should actually lift to a character of the stabilizer group $I_{\lambda}$. 
In fact (we don't prove it in the present paper),
this is the only condition (for existence of a differential character
on the coadjoint orbit of $\lambda \in \mathfrak{g}^*$). One can say that
differential character on an orbit is induced from a smooth character of
the stabilizer group of a point on the orbit.

Let us remark that usually the integrality condition  
is stated in terms of the form $\omega$ on $M$ and then 
additional conditions on $\lambda$ are imposed to ensure that 
the Lie algebra action on the prequantization bundle lifts
to an action of the group. For example, 
if $G$ is simply connected then the differential character exists
iff $\omega$ is integral.
Our point of view is that it is more natural to emphasise the character
$\Psi$ rather than the form $\omega$.
As an extreme example, if $G=T$ is a torus (compact connected abelian group)
then each coadjoint orbit is a point $\lambda \in \mathfrak{t}^*$, 
hence has unique
(integral) 2-form $\omega$ ($=0$). However only those points
for which $\lambda$ is a differential of a character of $T$ 
are prequantizable according to the above discussion. In fact 
the set of prequantizable orbits in $\mathfrak{t}^*$ is
the lattice of characters of $T$. Moreover, since an
orbit is a point, the differential character is just a
smooth character of $T$.

\subsection{Example:  coadjoint orbits of $SU(2)$ and $S^1$-reduction}
Let $S^2$ be a sphere about the origin in $\mathbb{R}^3$.
We think of $S^2$ as a coadjoint orbit of $SU(2)$ 
in $\mathfrak{su}_2^* \approx \mathbb{R}^3$.
In particular, $SU(2)$ acts on $S^2$ by rotations (via the 
2-fold covering map $SU(2) \rightarrow SO(3, \mathbb{R})$).

It is easy to see that given an integral 2-form $\omega \in \Omega^2 (S^2)$ 
one has unique $SU (2)$-invariant 
differential character $(\omega, \mu, \Psi )$ on $S^2$. 
For example, let us compute the restriction
$\chi : \{ \pm 1 \} \rightarrow \mathbb{R} / \mathbb{Z}$
of the character $\Psi$ to the center 
$\{ \pm 1 \} \subset SU (2)$. Here we think of
the center as a subgroup of the inertia group $I_p$ at a point
$p \in S^2$ (note that $\Psi$ is $SU(2)$-invariant and
hence $\chi$ does not depend on the point $p$). 
Let $X$ be (unique up to sign) element of the Lie algebra $\mathfrak{su}_2$ 
such that $e^{X} = -1 \in SU(2)$, $< \! \mu(p), X \! > = 0$, and
$\gamma (t) = e^{tX} p$, $0 < t < 1$, is a big circle on $S^2$ 
bounding a half-sphere $B \subset S^2$. Then we have
\begin{equation}\nonumber
\chi (-1) = \Psi (e^{X}) 
\stackrel{\eqref{OneParPsi}}{=} 
\Psi (\partial B) 
\stackrel{\eqref{InvariantOmegaCond}}{=} 
\int_B \omega = \frac{1}{2} \int_{S^2} \omega 
\quad \mathrm{mod} \ \mathbb{Z} \ .
\end{equation}
In fact, we did not have to assume that $<\mu (p), X> = 0$. For an
arbitrary $X \in \mathfrak{su}_2$ such that $e^{X} = -1$, one has
\begin{multline}\nonumber
\chi (-1) = \Psi (e^{X})  
\stackrel{\eqref{OneParPsi}}{=} 
\Psi (\partial B) - <\mu (p) , X> 
\stackrel{\eqref{InvariantOmegaCond}}{=} 
\int_B \omega - <\mu (p) , X> 
= \\ =
\frac{1}{2} \int_{S^2} \omega \quad \mathrm{mod} \ \mathbb{Z} \ ,
\end{multline}
where $B$ is either of the two parts of $S^2$ bounded by
the circle $\gamma (t) = e^{tX} p$, $0 < t < 1$, 
and the last equality follows from Archimedes 
(or Duistermaat-Heckman) Theorem.

Let us now turn to the reduction procedure with respect to
the action of a torus $T \approx S^1 \subset SU (2)$. 
We denote by $h$ the restriction of the moment map $\mu$
to the Lie algebra of $T$ (so if $T$ is the group of rotations 
about the vertical axis, then $h$ is the height function on $S^2$
up to scale). According to Section \ref{ReductionSection}
the reduction is the following sequence of functors
(cf. \eqref{ReductionDiagram}):
\begin{multline}\nonumber
{\mathcal{DC}^2_{\mathrm{inv}} 
\bigl( S^2 \leftleftarrows SU(2) \times S^2  \bigr)} 
\rightarrow 
{\mathcal{DC}^2_{\mathrm{inv}} 
\bigl( S^2 \leftleftarrows T \times S^2  \bigr)} 
\rightarrow \\ \rightarrow
{\mathcal{DC}^2_2 
\bigl( h^{-1} (0) \leftleftarrows T \times h^{-1} (0)  \bigr)} 
\xrightarrow{\sim}
{\mathcal{DC}^2_2
\bigl( p \leftleftarrows \{ \pm 1 \} \times p  \bigr)} \ .
\end{multline}
Here $p$ is a point on the circle $h^{-1} (0)$,
the first arrow is the restriction, the second is 
the actual reduction (i.e. restriction to the zero level of
the moment map), and the last (restriction) isomorphism is due to the
fact that $\mathcal{DC}^2_2$ is a stack.

The category ${\mathcal{DC}^2_2
\bigl( p \leftleftarrows \{ \pm 1 \} \times p  \bigr)}$
is just the category of characters (or 1-dim representations) of
the group $\{ \pm 1 \}$. On the other hand, an isomorphism class 
of an object $(c, h, \omega)$ of ${\mathcal{DC}^2_{\mathrm{inv}} 
\bigl( S^2 \leftleftarrows T \times S^2  \bigr)}$
is determined by $\omega$. Since the reduction is just 
the restriction of differential characters we see that it takes $\omega$ to
the character $\chi : \{ \pm 1 \} \rightarrow \mathbb{R} / \mathbb{Z}$ given, 
as above, by 
\begin{equation}\nonumber
\chi (-1) =  \frac{1}{2} \int_{S^2} \omega 
\quad \mathrm{mod} \ \mathbb{Z} \ .
\end{equation}

One of our main motivations for developing the theory of invariant
differential characters was to provide a context for the above formula.
It corresponds to the fact that an irreducible representation of $SU (2)$ has
a $T$-invariant vector iff its highest weight is even.

\begin{bibdiv}
\begin{biblist}

\bib{Behrend2004}{article}{
   author={Behrend, K.},
   title={Cohomology of stacks},
   conference={
      title={Intersection theory and moduli},
   },
   book={
      series={ICTP Lect. Notes, XIX},
   },
   date={2004},
   pages={249--294 (electronic)},
   review={\MR{2172499}},
}

\bib{BehrendXu2006}{article}{
   author={Behrend, K.},
   author={Xu, P.},
   title={Differentiable Stacks and Gerbes},
   eprint={arXiv:math.DG/0605694},
}

\bib{Beilinson1984}{article}{
   author={Be{\u\i}linson, A.},
   title={Higher regulators and values of $L$-functions},
   language={Russian},
   conference={
      title={Current problems in mathematics, Vol. 24},
   },
   date={1984},
   pages={181--238},
   review={\MR{760999}},
}

\bib{Bos2007}{article}{
   author={Bos, R.},
   title={Geometric quantization of Hamiltonian actions of Lie algebroids
   and Lie groupoids},
   journal={Int. J. Geom. Methods Mod. Phys.},
   volume={4},
   date={2007},
   number={3},
   pages={389--436},
   issn={0219-8878},
   review={\MR{2343354}},
}

\bib{CheegerSimons1985}{article}{
   author={Cheeger, J.},
   author={Simons, J.},
   title={Differential characters and geometric invariants},
   conference={
      title={Geometry and topology},
   },
   book={
      series={LNM 1167},
   },
   date={1985},
   pages={50--80},
   review={\MR{827262}},
}

\bib{Gomi2005}{article}{
   author={Gomi, K.},
   title={Equivariant smooth Deligne cohomology},
   journal={Osaka J. Math.},
   volume={42},
   date={2005},
   number={2},
   pages={309--337},
   issn={0030-6126},
   review={\MR{2147734}},
   eprint={arXiv:math/0307373},
}

\bib{GuilleminSternberg1999}{book}{
   author={Guillemin, V.},
   author={Sternberg, S.},
   title={Supersymmetry and equivariant de Rham theory},
   publisher={Springer},
   date={1999},
   pages={xxiv+228},
   isbn={3-540-64797-X},
   review={\MR{1689252}},
}

\bib{HopkinsSinger2005}{article}{
   author={Hopkins, M.},
   author={Singer, I.},
   title={Quadratic functions in geometry, topology, and M-theory},
   journal={J. Differential Geom.},
   volume={70},
   date={2005},
   number={3},
   pages={329--452},
   issn={0022-040X},
   review={\MR{2192936}},
   eprint={arXiv:math/0211216},
}

\bib{Kostant1970}{article}{
   author={Kostant, B.},
   title={Quantization and unitary representations. I. Prequantization},
   conference={
      title={Lectures in modern analysis and applications, III},
   },
   book={title={LNM 170}
   },
   date={1970},
   pages={87--208.},
   review={\MR{0294568}},
}

\bib{LermanMalkinPreq}{article}{
   author={Lerman, E.},
   author={Malkin, A.},
   title={Differential characters as stacks and prequantization},
   eprint={arXiv:0710.4340},
}

\bib{LupercioUribe2006}{article}{
   author={Lupercio, E.},
   author={Uribe, B.},
   title={Differential characters on orbifolds and string connections. I.
   Global quotients},
   conference={
      title={Gromov-Witten theory of spin curves and orbifolds},
   },
   book={
      series={Contemp. Math.},
      volume={403},
   },
   date={2006},
   pages={127--142},
   review={\MR{2234887}},
   eprint={math.DG/0311008},
}

\bib{MarsdenWeinstein1974}{article}{
   author={Marsden, J.},
   author={Weinstein, A.},
   title={Reduction of symplectic manifolds with symmetry},
   journal={Rep. Mathematical Phys.},
   volume={5},
   date={1974},
   number={1},
   pages={121--130},
   issn={0034-4877},
   review={\MR{0402819}},
}

\bib{Meyer1973}{article}{
   author={Meyer, K.},
   title={Symmetries and integrals in mechanics},
   conference={
      title={Dynamical systems},
   },
   book={
      publisher={Academic Press},
      place={New York},
   },
   date={1973},
   pages={259--272},
   review={\MR{0331427}},
}

\bib{MoerdijkMrcun2003}{book}{
   author={Moerdijk, I.},
   author={Mr{\v{c}}un, J.},
   title={Introduction to foliations and Lie groupoids},
   series={Cambridge Studies in Advanced Mathematics},
   volume={91},
   publisher={Cambridge University Press},
   place={Cambridge},
   date={2003},
   pages={x+173},
   isbn={0-521-83197-0},
   review={\MR{2012261}},
}

\bib{Weil1952}{article}{
   author={Weil, A.},
   title={Sur les th\'eor\`emes de de Rham},
   language={French},
   journal={Comment. Math. Helv.},
   volume={26},
   date={1952},
   pages={119--145},
   issn={0010-2571},
   review={\MR{0050280}},
}

\bib{Weinstein1987}{article}{
   author={Weinstein, A.},
   title={Symplectic groupoids and Poisson manifolds},
   journal={Bull. Amer. Math. Soc. (N.S.)},
   volume={16},
   date={1987},
   number={1},
   pages={101--104},
   issn={0273-0979},
   review={\MR{866024}},
}

\end{biblist}
\end{bibdiv}

\end{document}